\theoremstyle{plain}
\newtheorem{definition}{Definition} 
\newtheorem{theorem}[definition]{Theorem}
\newtheorem{lemma}[definition]{Lemma}
\newtheorem{corollary}[definition]{Corollary}
\newtheorem{proposition}[definition]{Proposition}
\newtheorem{question}[definition]{Question}
\theoremstyle{definition}
\newtheorem{example}[definition]{Example}
\newtheorem{remark}[definition]{Remark}
\numberwithin{equation}{section}
\newcommand{\A}{\mathcal A}
\newcommand{\C}{\mathcal C}
\newcommand{\F}{\mathcal F}
\newcommand{\T}{\mathcal T}
\newcommand{\N}{\mathbb N}
\newcommand{\Z}{\mathbb Z}
\newcommand{\Hom}{{\rm Hom}}
\newcommand{\Ext}{{\rm Ext}}
\newcommand{\Tor}{{\rm Tor}}
\newcommand{\Rmod}{R\text{-}{\rm Mod}}
\newcommand{\ses}[3]{0 \rightarrow {#1} \rightarrow {#2} \rightarrow {#3} \rightarrow 0 } 
\newcommand{\kker}[1]{\ensuremath{{{\rm Ker}\left(#1\right)}}}
\newcommand{\coker}[1]{\ensuremath{{{\rm Coker}\left(#1\right)}}}
\newcommand{\im}[1]{\ensuremath{{{\rm Im}\left(#1\right)}}}
\newcommand{\FP}[1]{\mathcal{FP}_{#1}}
\newcommand{\FPinj}[1]{\mathcal{FP}_{#1}\text{-}{\rm Inj}}
\newcommand{\FPflat}[1]{\mathcal{FP}_{#1}\text{-}{\rm Flat}}
\newcommand{\Coh}[1]{{#1}\text{-$\mathit{Coh}$}}
\newcommand{\Her}[1]{{#1}\text{-$\mathit{Her}$}}
\newcommand{\pd}[1]{\text{pd}{(#1)}}
\newcommand{\wdim}[1]{\text{wd}{(#1)}}
\newcommand{\supp}[1]{\text{supp}{(#1)}}
\newcommand{\FPnid}[1]{\text{FP}_n\text{-id}(#1)}
\newcommand{\FPnfd}[1]{\text{FP}_n\text{-fd}(#1)}
\begin{document}

\title{Torsion pairs over $n$-Hereditary rings}

\author{Daniel Bravo}
 \address{
 Instituto de Ciencias F\'isicas y Matem\'aticas \\
 Facultad de Ciencias \\ 
 Universidad Austral de Chile \\
 Valdivia, Chile}
 \email{daniel.bravo@uach.cl}
 \thanks{The first author was partially supported by CONICYT + FONDECYT/Regular + 1180888.}


 \author{Carlos E. Parra}
 \email{carlos.parra@uach.cl}
  \thanks{The second author was partially supported by CONICYT + FONDECYT/Iniciaci\'on + 11160078.}


\begin{abstract} We study the notions of $n$-hereditary rings and its connection to the classes of finitely $n$-presented modules, FP$_n$-injective modules,  FP$_n$-flat modules and $n$-coherent rings. We give characterizations of $n$-hereditary rings in terms of quotients of injective modules and submodules of flat modules, and a characterization of $n$-coherent using an injective cogenerator of the category of modules. We show  two torsion pairs with respect to the FP$_n$-injective modules and the FP$_n$-flat modules over  $n$-hereditary rings. We also provide an example of a B\'ezout ring which is 2-hereditary, but  not 1-hereditary, such that the torsion pairs over this ring are not trivial.
\end{abstract}


\subjclass[2010]{Primary 18E40; Secondary  16E60}

\keywords{$n$-hereditary, $n$-coherent, finitely $n$-presented, FP$_n$-injective, FP$_n$-flat, torsion pairs, B\'ezout ring, injective cogenerator}

\date{\today}

\maketitle


\tableofcontents

\section*{Introduction}\label{introduction}

The notion of torsion pair was introduced in the sixties by S. Dickson  in the setting of abelian categories, generalizing the classical notion of torsion pairs for abelian groups; see \cite{Dick}. Since then, the theory of torsion pairs has been greatly developed and  many applications have been given to areas such as  representation theory of Artin algebras, homological algebra, non commutative localization theory, and tilting theory to mention a few; see \cite{Assem-Saorin}, \cite{Bazzoni-Her},   \cite{Colpi-Trilifaj}, \cite{Colpi}, \cite{Gobel}, \cite{HRS}, \cite{rings-of-quot}, . The importance of torsion pairs is highlighted by the theorem of Popescu and Gabriel \cite[X, \S 4]{rings-of-quot} which reduces the  theory of Grothendieck categories  to the study of categories of modules of quotients by a hereditary torsion pairs. All this have made the theory of torsion pairs a valuable toolkit and an active research area on its own; see \cite{BraPa}, \cite{CGM}, \cite{Hrbek}, \cite{Parra-Saorin}.

Recently the classes of FP$_n$-injective modules and FP$_n$-flat modules have been studied in detail, generalized to chain complexes and applications have been given to cotorsion pairs, duality pairs, and model categories; see \cite{BGH}, \cite{BraPe}, \cite{Pe-Zhao}. In particular, some of those results showed that over certain generalization of coherent rings, namely $n$-coherent rings, the cotorsion pairs are well behaved. In this sense, it seems natural to investigate whether these classes of modules also fit in the theory of torsion pairs. Alternatively, one could ask  if there are any conditions required on the ring such that any such torsion pair exist. Following the known facts that the classes of FP$_n$-injective and FP$_n$-flat modules are closed under products, summands and extensions, when $n>1$, it remains as the main obstacle for these classes of modules to form part of a torsion pair, to be also  closed by either quotients or submodules (indeed, we already know these classes are closed under pure submodules and pure quotients). 

A classical result from H. Cartan and S. Eilenberg shows that over hereditary rings the class of injective modules is closed under quotients; see \cite{C-E}.  C. Meggiben observed a slightly more general result, namely, that over semi-herditary rings the class of FP-injective modules is also closed under quotients; see \cite{megibben}. In this way, we are motivated to investigate such closure properties for FP$_n$-injective modules over a generalized version of semi-hereditary rings. Thus, we introduce the concept of $n$-hereditary rings to reach this goal and to investigate how far  the classes of  FP$_n$-injective modules and FP$_n$-flat modules are from being part of a torsion pair. This motivation naturally leads to investigate any relevant properties of these rings and its connections to these classes of modules. In fact, we show that over $n$-hereditary rings, the class of FP$_n$-injective modules is the torsion class of a torsion pair, and that the class of FP$_n$-flat modules is the torsion-free class of a torsion pair. We are also able to provide an example of  2-hereditary ring, that  is not 1-hereditary (or semi-hereditary), which shows that the torsion pairs in question are in fact non trivial.

This article is organized as follows. Section \ref{FPn} describes the class of finitely $n$-presented modules, the basic object of our study; collect some of its properties  and describe a relevant property for the sections to follow that also doesn't seem to be available previously in the literature. Section \ref{sec-n-her-n-coh} introduces the notion of $n$-hereditary rings and investigate its relation with the class of the finitely $n$-presented modules and $n$-coherent rings.   The notion of FP$_n$-injective and FP$_n$-flat modules is recalled in Section \ref{Relative-Injective-and-Flat}, where the key relations of these two classes of modules with $n$-hereditary rings are also investigated. We also show new a characterization of $n$-coherent rings in terms of closure properties of a certain collection of injective modules and the class of FP$_n$-injective modules. These results are used in Section \ref{sec-torsion} to establish the two torsion pairs over $n$-hereditary rings, and its connection with (co)tilting classes; as an application of these results, we get from these torsion pairs a far from obvious results about the class of FP$_n$-flat modules. Finally, two appendixes sections are added; in the first one, we show that the mentioned 2-hereditary ring example is a 2-coherent, B\'ezout ring, but not 1-hereditary, and the necessary properties which give that the torsion pairs from the previous section are not trivial. The second appendix is about a property of Grothendieck categories that when applied to the setting of modules, gives the key result for the new characterization of $n$-coherent rings.

Throughout this paper, {$R$ denotes an associative ring with unit}, $\Rmod$  denotes category of left $R$-modules, and unless otherwise noted, the expression $R$-module mean left $R$-module.


\section{Finitely $n$-presented modules} \label{FPn}

Let $n \geq 0$ be an integer. An $R$-module $M$ is said to be \textit{finitely $n$-presented}, if  there is an exact sequence 
\[ 
F_n \to F_{n-1} \rightarrow \cdots \rightarrow F_1 \rightarrow F_0 \rightarrow M \rightarrow 0,
\]
where the modules $F_i$ are finitely generated and free (or projective) modules, for every $0 \leq i \leq n$. Denote by \textit{$\FP{n}$} the class of all finitely $n$-presented modules. Thus $\FP{0}$ is the class of finitely generated modules, and $\FP{1}$ is the class of finitely presented modules. For convenience, we let $\FP{-1}$ be the whole class of $R$-modules. Also we consider the class, \textit{$\FP{\infty}$}, of the \textit{finitely $\infty$-presented} modules,  formed by modules that posses a resolution by finitely generated free (or projective) modules.
Note that the class $\FP{\infty}$ is not empty, since any finitely generated projective module is finitely $\infty$-presented. 

We immediately observe the following descending chain of inclusions:
\begin{equation}\label{chain-of-FPn}
\FP{0} \supseteq \FP{1} \supseteq \cdots \supseteq \FP{n} \supseteq \FP{n+1} \supseteq \cdots \supseteq \FP{\infty}.
\end{equation}

We include two examples of rings to show how the chain \eqref{chain-of-FPn} behaves; for more details about these two examples we refer the reader to \cite{BraPe}.

\begin{example} \label{2-coherent-ring-example}
Let $k$ be a field and $R$ be the following polynomial ring:
\[
R := k[x_1,x_2,x_3,\ldots] / ( x_i x_j)_{i,j \geq 1}.
\]
In this ring every finitely $2$-presented module is finitely generated free.  Also we can quickly check that $R/(x_1) \in \FP{1} \setminus \FP{2}$ and that $(x_1) \in \FP{0} \setminus \FP{1}$.  Thus we have that the chain of inclusions in \eqref{chain-of-FPn} collapses at $2$:
\[
\FP{0} \supset \FP{1} \supset \FP{2}  = \FP{n} = \FP{\infty}.
\]
\end{example}

The next example shows that \eqref{chain-of-FPn} may never collapse. 

\begin{example}  \label{strictly-infty-ring}
Let $k$ be a field and consider the following ring:
\[
R := {k[\ldots,x_{3},x_2,x_1,y_1,y_2,y_3,\ldots]} / {(x_{j+1} x_{j},  x_1 y_1, y_1 y_i)_{i,j \geq 1}}
\]
Then $(y_1) \in \FP{0} \setminus \FP{1}$, and $(x_i) \in \FP{i} \setminus \FP{i+1}$ for $i \geq 1$. Hence in this case, the chain in \eqref{chain-of-FPn} is strict at every level.
\end{example}

Several results about $\FP{n}$ and $\FP{\infty}$ are collected in \cite{BraPe}. We include here the following results. 

\begin{proposition} \label{properties_FPn} 
Let $n \geq 0$. $\FP{n}$ is closed under cokernels of monomorphisms, extensions, and direct summands.
\end{proposition}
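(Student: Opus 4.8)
The plan is to prove all three closure properties simultaneously by induction on $n$, writing $P(n)$ for the conjunction of the three statements for the class $\FP{n}$. The engine of the whole argument is a \emph{syzygy reduction}: for a finitely generated module $M$ and $n \geq 1$, one has $M \in \FP{n}$ if and only if for some (equivalently, every) short exact sequence $\ses{K}{F}{M}$ with $F$ finitely generated free the syzygy satisfies $K \in \FP{n-1}$. The ``some $\Rightarrow$ every'' half of this equivalence is the one subtle point, and I would deduce it from the generalized Schanuel lemma ($K \oplus P' \cong K' \oplus P$ for two such presentations) together with the direct-summand part of the inductive hypothesis $P(n-1)$: if $K \in \FP{n-1}$, then $K \oplus P'$ is an extension of the finitely generated projective $P'$ (which lies in $\FP{\infty} \subseteq \FP{n-1}$) by $K$, hence lies in $\FP{n-1}$ by extension closure at level $n-1$, whence $K'$, being a summand of $K \oplus P' \cong K' \oplus P$, lies in $\FP{n-1}$ as well.

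The base case $P(0)$ is immediate, since $\FP{0}$ is the class of finitely generated modules: a quotient of a finitely generated module is finitely generated (cokernels of monomorphisms), an extension of finitely generated modules is finitely generated, and a summand of a finitely generated module is finitely generated.

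For the inductive step I assume $P(n-1)$ and treat each property through the syzygy reduction. For \textbf{direct summands}, suppose $M \oplus N \in \FP{n}$; then $M$ and $N$ are finitely generated, and choosing syzygies $\ses{K_M}{F_M}{M}$ and $\ses{K_N}{F_N}{N}$ yields a syzygy $K_M \oplus K_N$ of $M \oplus N$, which lies in $\FP{n-1}$ by the reduction. Applying summand closure from $P(n-1)$ gives $K_M \in \FP{n-1}$, so $M \in \FP{n}$. For \textbf{extensions}, given $\ses{A}{B}{C}$ with $A, C \in \FP{n}$, the horseshoe lemma applied to syzygy presentations of $A$ and $C$ produces a syzygy presentation $\ses{K_B}{F_A \oplus F_C}{B}$ sitting in an exact sequence $\ses{K_A}{K_B}{K_C}$ with $K_A, K_C \in \FP{n-1}$; extension closure from $P(n-1)$ then yields $K_B \in \FP{n-1}$, so $B \in \FP{n}$.

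For \textbf{cokernels of monomorphisms}, given $\ses{A}{B}{C}$ with $A, B \in \FP{n}$, note first that $C$ is finitely generated; I choose a syzygy $\ses{K_B}{F}{B}$ with $K_B \in \FP{n-1}$ and form the composite surjection $F \twoheadrightarrow B \twoheadrightarrow C$ with kernel $K_C$. A direct diagram chase identifies $K_C/K_B \cong \ker(B \to C) = A$, producing a short exact sequence $\ses{K_B}{K_C}{A}$. Since the chain \eqref{chain-of-FPn} gives $A \in \FP{n} \subseteq \FP{n-1}$ and $K_B \in \FP{n-1}$, extension closure from $P(n-1)$ forces $K_C \in \FP{n-1}$, so that $\ses{K_C}{F}{C}$ exhibits $C \in \FP{n}$. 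The main obstacle throughout is organizing the induction so that every level-$n$ argument invokes only level-$(n-1)$ closure; in particular the direct-summand case genuinely requires the Schanuel-based independence of the syzygy, because the definition of $\FP{n}$ supplies only \emph{a} finite presentation, rather than the split one built from $M$ and $N$ separately.
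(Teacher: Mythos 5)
Your proof is correct, and it is worth noting at the outset that the paper itself offers no proof of Proposition \ref{properties_FPn}: the statement is quoted from \cite{BraPe}, so there is no internal argument to match yours against line by line. Your simultaneous induction on $n$, pivoting on the syzygy reduction ($M \in \FP{n}$ iff some, equivalently every, kernel of a finitely generated free presentation lies in $\FP{n-1}$), is the classical elementary route for $n$-presented modules, and you correctly isolate and discharge the one genuinely delicate point: the definition supplies only \emph{a} resolution, so the direct-summand case needs independence of the syzygy from the chosen presentation, which you obtain from Schanuel's lemma at the cost of invoking extension and summand closure at level $n-1$ --- and your bookkeeping keeps the induction non-circular, since every level-$n$ step consumes only $P(n-1)$. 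The horseshoe argument for extensions and the identification $K_C/K_B \cong A$ giving $\ses{K_B}{K_C}{A}$ for cokernels of monomorphisms (together with $\FP{n} \subseteq \FP{n-1}$ from the chain \eqref{chain-of-FPn}) both check out. For comparison, the machinery the paper does state suggests a slicker functorial alternative: by Theorem \ref{FPn_Ext_Tor}, membership in $\FP{n+1}$ is detected by $\Ext^i_R(-,-)$ commuting with direct limits for $i \leq n$, and then closure under extensions and cokernels of monomorphisms follows from comparing the long exact $\Ext$-sequence of $\ses{A}{B}{C}$ evaluated at $\varinjlim N_\lambda$ with the direct limit of the sequences at $N_\lambda$ (five lemma, using exactness of direct limits in $\Rmod$), while summands follow because a retract of an isomorphism of the comparison maps is an isomorphism. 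That route is shorter but rests on Theorem \ref{FPn_Ext_Tor}, itself a nontrivial input, and needs a separate trivial treatment of $n=0$; yours is longer but uses only Schanuel and the horseshoe lemma, is self-contained at the level of the definition, and transfers verbatim to any abelian category with enough finitely generated projectives.
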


The following results also appear in \cite{Brown-Cohomology-of-groups}, and \cite{bieri}.
\begin{theorem} \label{FPn_Ext_Tor}
The following conditions are equivalent for every left $R$-module $M$ and every $n \geq 0$:
\begin{enumerate}
\item $M \in \FP{n+1}$.

\item $\Ext^i_R(M,-)$ commutes with direct limits for all $0 \leq i \leq n$.

\item $\Tor_i^R(-,M)$ commutes with direct products for all $0 \leq i \leq n$. 
\end{enumerate} 
\end{theorem}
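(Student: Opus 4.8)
The plan is to argue by induction on $n$, treating the equivalences $(1)\Leftrightarrow(2)$ and $(1)\Leftrightarrow(3)$ in parallel, since both rest on the same dimension-shifting mechanism together with the exactness of direct limits in $\Rmod$ and of direct products in $\modR$. For the base case $n=0$ I would invoke the classical characterizations of finitely presented modules: $M\in\FP{1}$ iff the canonical comparison map $\varinjlim \Hom_R(M,N_\lambda)\to\Hom_R(M,\varinjlim N_\lambda)=\Ext^0_R(M,\varinjlim N_\lambda)$ is an isomorphism for every direct system $(N_\lambda)$, and iff the canonical map $\left(\prod_j A_j\right)\otimes_R M=\Tor_0^R\!\left(\prod_j A_j,M\right)\to\prod_j\Tor_0^R(A_j,M)$ is an isomorphism for every family $(A_j)_j$ of right $R$-modules. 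Note that conditions (2) and (3) at any level $n\geq 0$ contain their respective $i=0$ instances, so in every case one first extracts that $M$ is finitely presented, and $(1)$ itself gives $M\in\FP{n+1}\subseteq\FP{1}$.

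With the base case in hand, the inductive step runs as follows. Since each of the three conditions at level $n$ forces $M\in\FP{1}$, I may fix a short exact sequence $\ses{K}{F}{M}$ with $F$ finitely generated free and, consequently, $K$ finitely generated; by splicing resolutions (and Schanuel's lemma, to render $K$ independent of the choices up to the relevant class) one obtains the reduction that $M\in\FP{n+1}$ iff $K\in\FP{n}$. Feeding this sequence into the long exact sequences for $\Ext_R(-,N)$ and $(-)\otimes_R K$, and using $\Ext^i_R(F,-)=0$ and $\Tor_i^R(-,F)=0$ for $i\geq 1$, dimension shifting yields natural isomorphisms $\Ext^{i}_R(K,-)\cong\Ext^{i+1}_R(M,-)$ and $\Tor_i^R(-,K)\cong\Tor_{i+1}^R(-,M)$ for all $i\geq 1$, together with the four-term exact sequences of functors in the bottom degrees,
\[
0\to\Hom_R(M,-)\to\Hom_R(F,-)\xrightarrow{\ \phi\ }\Hom_R(K,-)\to\Ext^1_R(M,-)\to 0,
\]
and its $\Tor$-analogue. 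The isomorphisms immediately match the conditions in degrees $2\le i\le n$ for $M$ with degrees $1\le i\le n-1$ for $K$, so by the inductive hypothesis applied to $K$ the whole statement reduces to controlling the bottom-degree terms.

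The main obstacle is precisely this bottom-degree bookkeeping, namely relating the commutation of $\Ext^1_R(M,-)$ (resp.\ $\Tor_1^R(-,M)$) with the commutation of $\Hom_R(K,-)$ (resp.\ $(-)\otimes_R K$). Here I would use a two-out-of-three principle: for a short exact sequence of functors $0\to G'\to G\to G''\to 0$, the exactness of $\varinjlim$ in $\Rmod$ produces a commutative ladder with exact rows comparing $\varinjlim G^{?}(N_\lambda)$ with $G^{?}(\varinjlim N_\lambda)$, whence the five lemma shows that if two of the three comparison maps are isomorphisms then so is the third. Splitting the four-term sequence above through $\im{\phi}$ into two short exact sequences, and using that $\Hom_R(F,-)$ commutes with direct limits (as $F$ is finitely generated free), this principle lets me pass back and forth between ``$\Hom_R(M,-)$ and $\Ext^1_R(M,-)$ commute'' and ``$\Hom_R(K,-)$ commutes''; the identical argument with the exactness of direct products in $\modR$ and $(-)\otimes_R F$ commuting with products handles (3). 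Chaining these equivalences closes the induction. The only delicate points are the naturality of all comparison maps and the exactness bookkeeping in the five-lemma step; once the base case and the exactness of $\varinjlim$ and $\prod$ are secured, the $\Ext$ and $\Tor$ sides are formally identical.
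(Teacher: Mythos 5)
Your proposal is correct: note that the paper itself states Theorem \ref{FPn_Ext_Tor} without proof, citing Bieri and Brown, and your argument --- induction on $n$ via a short exact sequence $\ses{K}{F}{M}$ with $K$ finitely generated, dimension shifting, and the two-out-of-three principle for comparison maps on short exact sequences of functors (using exactness of $\varinjlim$ and of products) --- is essentially the standard proof found in those references. The only inputs you leave to the classics are exactly the right ones (the $\Hom$-characterization of finitely presented modules and Lenzing's tensor-product characterization for the base case), and your bottom-degree bookkeeping through $\im{\phi}$ correctly handles the one genuinely delicate step.
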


The class  $\FP{\infty}$ has all the properties from the previous proposition and one more, as indicated in the following result.

\begin{theorem} \label{FPinfty-is-thick}
 $\FP{\infty}$ is closed under kernels of epimorphisms.
\end{theorem}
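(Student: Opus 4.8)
The plan is to characterize membership in $\FP{\infty}$ through the $\Tor$ condition of Theorem \ref{FPn_Ext_Tor} and then run a five lemma argument on long exact sequences. First I would record the translation: since $\FP{m} \subseteq \FP{m-1}$ for every $m$, a module $M$ lies in $\FP{\infty}$ if and only if $M \in \FP{m}$ for all $m \geq 1$, which by the $\Tor$ part of Theorem \ref{FPn_Ext_Tor} is equivalent to requiring that the natural morphism $\Tor_i^R(\prod_\lambda N_\lambda, M) \to \prod_\lambda \Tor_i^R(N_\lambda, M)$ be an isomorphism for every family $\{N_\lambda\}$ of right $R$-modules and every $i \geq 0$. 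This reduces the theorem to a statement about when these comparison maps are isomorphisms.

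Now let $f \colon A \to B$ be an epimorphism with $A, B \in \FP{\infty}$ and set $K = \kker{f}$, so that we have a short exact sequence $\ses{K}{A}{B}$ of left $R$-modules. Fix a family $\{N_\lambda\}$ of right $R$-modules and write $P = \prod_\lambda N_\lambda$. Applying $\Tor_\ast^R(P, -)$ to this sequence yields one long exact sequence, while applying $\prod_\lambda \Tor_\ast^R(N_\lambda, -)$ yields another; the latter is exact because direct products are exact in $\modR$. The canonical comparison morphisms are natural in the short exact sequence, hence commute with the connecting maps, so they assemble the two long exact sequences into a commutative ladder; by hypothesis the vertical maps attached to $A$ and to $B$ are isomorphisms in every degree.

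The core of the argument is then a degreewise application of the five lemma. For each $j \geq 0$, consider the five consecutive terms $\Tor_{j+1}^R(P,A)$, $\Tor_{j+1}^R(P,B)$, $\Tor_j^R(P,K)$, $\Tor_j^R(P,A)$, $\Tor_j^R(P,B)$ of the top row together with their vertical comparison maps; the four outer maps are isomorphisms, so the five lemma forces the central map attached to $K$ in degree $j$ to be an isomorphism as well. Since $j$ and $\{N_\lambda\}$ were arbitrary, $\Tor_i^R(-,K)$ commutes with direct products for all $i \geq 0$, and therefore $K \in \FP{\infty}$ by the characterization recorded above.

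There is no serious obstacle here; the two points demanding care are verifying that products stay exact so that the bottom row is genuinely a long exact sequence, and checking the index bookkeeping so that the five lemma reaches every degree $j \geq 0$, including $j=0$ where $\Tor_0^R = -\otimes_R-$. An alternative route avoids $\Tor$ altogether: one proves by induction on $n$ the shifting statement that in a short exact sequence $\ses{K}{A}{B}$ with $A \in \FP{n}$ and $B \in \FP{n+1}$ one has $K \in \FP{n}$, building presentations of $K$ from those of $A$ and $B$ via Proposition \ref{properties_FPn}; applying this with $A \in \FP{n}$ and $B \in \FP{n+1}$ for every $n$ again yields $K \in \FP{\infty}$. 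I would present the $\Tor$ argument as the main proof, since it sidesteps the inductive bookkeeping and uses the characterization already available in Theorem \ref{FPn_Ext_Tor}.
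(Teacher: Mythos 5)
Your main computation is sound: the comparison morphisms $\Tor_i^R(\prod_\lambda N_\lambda,-)\to\prod_\lambda\Tor_i^R(N_\lambda,-)$ are natural with respect to the connecting homomorphisms of the long exact $\Tor$-sequence, products of exact sequences of abelian groups are exact, and the degreewise five lemma (correctly handled down to $j=0$) shows that the class $\mathcal{X}$ of modules $M$ such that $\Tor_i^R(-,M)$ commutes with direct products for \emph{all} $i\geq 0$ is closed under kernels of epimorphisms. Note, for comparison, that the paper itself gives no proof of Theorem \ref{FPinfty-is-thick} but imports it from \cite{BraPe}, so your argument is being judged on its own terms.

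The genuine gap is in your opening translation, and your proof funnels through it twice. You assert that $M\in\FP{\infty}$ if and only if $M\in\FP{m}$ for all $m\geq 1$, justified only by the descending chain \eqref{chain-of-FPn}. That chain gives the trivial inclusion $\FP{\infty}\subseteq\bigcap_{m}\FP{m}$; the inclusion your conclusion actually requires --- that a module lying in every $\FP{m}$ possesses a \emph{single} infinite resolution by finitely generated free modules --- is not formal, since membership in each $\FP{m}$ a priori provides only unrelated finite partial resolutions of each length. (Theorem \ref{FPn_Ext_Tor} returns from your $\Tor$-condition only the statement $K\in\FP{m}$ for every $m$, not $K\in\FP{\infty}$ as defined.) The standard repair is to splice resolutions: given $F_0\twoheadrightarrow M$ with $F_0$ finitely generated free and $M\in\bigcap_m\FP{m}$, one shows $\kker{F_0\to M}\in\FP{m}$ for all $m$ and iterates; this is proved by a Schanuel-type comparison of partial resolutions together with the closure of $\FP{n}$ under extensions and direct summands (Proposition \ref{properties_FPn}). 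But that is precisely the shifting lemma you relegate to your ``alternative route,'' so the $\Tor$ argument does not in fact sidestep the inductive bookkeeping --- it postpones it to an unproved equivalence. Once $\bigcap_{m}\FP{m}=\FP{\infty}$ is established (or quoted with proof), your five-lemma ladder completes a correct proof; alternatively, the shifting argument alone proves the theorem directly and is the more self-contained route.
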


\begin{remark} \label{FPn-set}
For any finitely generated module  $M$, we have that $\text{Card}(M) \leq \max \{ \aleph_0, \text{Card}(R) \}$. Hence 
  we can choose $\mathcal{S}_n$,  a \emph{set} of representatives of finitely generated modules in $\FP{n}$, such that every module in $\FP{n}$ is isomorphic to a module in $\mathcal{S}_n$.
\end{remark}

In the next two sections, we will use the class of finitely $n$-presented modules to describe two types of rings.


\section{$n$-Hereditary rings and $n$-coherent rings} \label{sec-n-her-n-coh}

As defined in \cite{C-E}, recall that a ring is said to be left \textit{hereditary} if every left ideal is a projective module. This is also equivalent to saying that every submodule of a projective left module is also a projective module, or that every quotient (homomorphic image) of an injective left module is injective. A bit more general are left \textit{semi-hereditary} rings; that is, rings such that every finitely generated left ideal is projective. This is equivalent to saying that every finitely generated submodule of a projective left module is also a projective module, or that every quotient (homomorphic image) of a FP-injective left module is FP-injective {\cite[Theorem 2]{megibben}}. From these observations we define the following:

\begin{definition} \label{def-n-her}
A ring is said to be left $n$-hereditary if every finitely $(n-1)$-presented  submodule of a finitely generated  projective left module is also a projective module.
\end{definition}

This way a left $1$-hereditary ring is the same as a left semi-hereditary ring, and if we allow for \textit{finitely $(-1)$-presented} modules to be any module, then left $0$-hereditary rings coincide with left hereditary. From now on, and unless otherwise noted, the term $n$-hereditary ring will mean left $n$-hereditary ring.

A  characterization of $n$-hereditary rings can be given in terms of the class of finitely $n$-presented modules, $\pd{M}$,  the projective dimension of an $R$-module $M$, $\wdim{M}$, the weak dimension of the $R$-module $M$.

\begin{lemma} \label{proj-dim-FPn-less-than-1}
Let $n \geq 2$. The following statements are equivalent.
\begin{enumerate}
\item $R$ is a  $n$-hereditary ring.  \label{dim-characterization-of-n-her}
\item $\pd{M} \leq 1$ for all $M \in \FP{n}$. \label{proj-dim-characterization}
\item $\wdim{M} \leq 1$ for all $M \in \FP{n}$. \label{weak-dim-characterization}
\end{enumerate}
Furthermore if $n = 1$, then we have the following statement: $R$ is  $1$-hereditary if and only if $\pd{M}\leq 1$ for all $M \in \FP{1}$.
\end{lemma}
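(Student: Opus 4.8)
The plan is to run the two equivalences $(\ref{dim-characterization-of-n-her}) \Leftrightarrow (\ref{proj-dim-characterization})$ and $(\ref{proj-dim-characterization}) \Leftrightarrow (\ref{weak-dim-characterization})$, reserving the hypothesis $n \geq 2$ for the single place where it is genuinely needed. The backbone of the argument is an elementary \emph{splicing} observation about syzygies. On the one hand, if $M \in \FP{n}$ has a presentation $F_n \to \cdots \to F_0 \to M \to 0$ by finitely generated projectives and $K = \kker{F_0 \to M} = \im{F_1 \to F_0}$, then truncating yields an exact sequence $F_n \to \cdots \to F_1 \to K \to 0$, so that $K \in \FP{n-1}$. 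On the other hand, given $K \in \FP{n-1}$ with presentation $G_{n-1} \to \cdots \to G_0 \to K \to 0$ and any embedding into a finitely generated projective $P$, splicing $G_0 \to K \hookrightarrow P$ produces an exact sequence $G_{n-1} \to \cdots \to G_0 \to P \to P/K \to 0$, so that $P/K \in \FP{n}$.

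For $(\ref{dim-characterization-of-n-her}) \Rightarrow (\ref{proj-dim-characterization})$ I would take $M \in \FP{n}$, form $\ses{K}{F_0}{M}$ with $F_0$ finitely generated projective, and use the first half of the splicing observation to place $K \in \FP{n-1}$; since $K$ is then a finitely $(n-1)$-presented submodule of the finitely generated projective $F_0$, Definition \ref{def-n-her} makes $K$ projective, whence $\pd{M} \leq 1$. For $(\ref{proj-dim-characterization}) \Rightarrow (\ref{dim-characterization-of-n-her})$ I would start from a finitely $(n-1)$-presented $K \subseteq P$ with $P$ finitely generated projective, use the second half of the splicing observation to get $P/K \in \FP{n}$, apply the hypothesis to obtain $\pd{P/K} \leq 1$, and then conclude from $\ses{K}{P}{P/K}$ via Schanuel's lemma that $K$ is a direct summand of a projective module, hence projective.

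The implication $(\ref{proj-dim-characterization}) \Rightarrow (\ref{weak-dim-characterization})$ is immediate from $\wdim{M} \leq \pd{M}$. For $(\ref{weak-dim-characterization}) \Rightarrow (\ref{proj-dim-characterization})$ --- and this is exactly where $n \geq 2$ enters --- I would again take $M \in \FP{n}$ together with $\ses{K}{F_0}{M}$; the condition $\wdim{M} \leq 1$ forces the syzygy $K$ to be flat, while the splicing observation gives $K \in \FP{n-1}$. Since $n \geq 2$, the chain \eqref{chain-of-FPn} yields $\FP{n-1} \subseteq \FP{1}$, so $K$ is finitely presented, and a finitely presented flat module is projective; thus $\pd{M} \leq 1$. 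I expect this last step to be the main obstacle, and it is precisely why the hypothesis $n \geq 2$ cannot be dropped from $(\ref{weak-dim-characterization})$: for $n = 1$ the syzygy $K$ is only finitely generated, and finitely generated flat modules need not be projective. This is the reason the final statement treats $n = 1$ separately, asserting only the equivalence of $1$-heredity with $(\ref{proj-dim-characterization})$; that equivalence is then obtained verbatim from the $n = 1$ instances of the two splicing arguments of the previous paragraph, neither of which invokes the weak-dimension condition.
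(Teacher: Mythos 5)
Your proposal is correct and follows essentially the same route as the paper: the syzygy of $M \in \FP{n}$ lies in $\FP{n-1}$, the quotient $P/K$ lies in $\FP{n}$, and the hypothesis $n \geq 2$ is spent exactly where the paper spends it, namely to place the flat syzygy in $\FP{1}$ so that ``finitely presented flat implies projective'' applies. The only cosmetic differences are that you organize the argument as two equivalences, adding a routine Schanuel step for $(2)\Rightarrow(1)$ where the paper instead closes the cycle $(1)\Rightarrow(2)\Rightarrow(3)\Rightarrow(1)$, and that you verify the $n=1$ converse directly from Definition \ref{def-n-her} rather than via finitely generated ideals.
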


\begin{proof}
\eqref{dim-characterization-of-n-her} $\implies$ \eqref{proj-dim-characterization}: 
Suppose $R$ is an $n$-herditary ring and let $M \in \FP{n}$. Consider the following short exact sequence:
\[
\ses{\Omega M}{R^k}{M}.
\]
Since $\Omega M \in \FP{n-1}$ and $R^k$ is a finitely generated and projective module, then $\Omega M$ is also projective. Thus $\pd{M} \leq 1$.

\eqref{proj-dim-characterization} $\implies$ \eqref{weak-dim-characterization}: 
This is clear, since  $\wdim{M} \leq \pd{M}$ for any $M \in \Rmod$.

\eqref{weak-dim-characterization} $\implies$ \eqref{dim-characterization-of-n-her}: 
Suppose that $N \in \FP{n-1}$ is a submodule of a finitely generated projective module $P$. Then $P/N \in \FP{n}$, and so $\wdim{P/N} \leq 1$; hence $N$ is flat. But $n\geq 2$, so $N$ is at least in $\FP{1}$.  Thus, {\cite[Proposition 3.2.12]{E-J}} gives that $N$ is projective, showing that  $R$ is  $n$-hereditary.

Finally, for the last statement, observe that  if $I$ is a finitely generated ideal, then $R/I \in \FP{1}$ and so $I$ is projective; thus $R$ is semi-hereditary, or 1-hereditary. For the converse, note that the proof  of \eqref{dim-characterization-of-n-her} $\implies$ \eqref{proj-dim-characterization} also applies. 
\end{proof}

Using this last characterization,  we give an example of a (commutative) $2$-hereditary ring, that is not $1$-hereditary. 
\begin{example} \label{2-her-ring}

Let $R = \Z \oplus \bigoplus_{i \geq 1} \Z / 2\Z$ with addition defined component wise,  and multiplication given by 
\[
(m,a) \cdot (n,b) = (mn, mb + na + ab)
\]
where $m,n \in \Z$, $a,b \in \bigoplus_{i \geq 1} \Z / 2\Z$ and $m \cdot a = (m a_1,m a_2,m a_3, \ldots)$.

From  {\cite[Example 1.3(b)]{Vasconcelos}}, we have that $\text{gl.wd}(R) \leq 1$; here $\text{gl.wd}(R)$ corresponds to the weak global dimension of the ring $R$. So in particular,   $\text{wd}(M)\leq 1$, for any $M \in \FP{n}$ and  any $n\geq 2$. Thus by Lemma \ref{proj-dim-FPn-less-than-1} we have that $R$ is $2$-hereditary.  In Proposition \eqref{not-1-her} of the Appendix \ref{proof-of-the-ring} section, we show that this ring is not a semi-hereditary ring, or 1-hereditary. 
\end{example}

Another immediate observation is that any ring such that $\text{gl.wd}(R) \leq 1$ is $2$-hereditary. The same proof as in the previous example works, and indeed we note that it is also $n$-hereditary for all $n \geq 2$, but this is always the case as we observe next.

As a consequence of  Definition \ref{def-n-her} and the chain \eqref{chain-of-FPn}, if $R$ is $n$-hereditary, then $R$ is also $k$-hereditary for all $k \geq n$.  Hence, if \Her{n} denotes the collection of all $n$-hereditary rings, then we get the following chain:
\begin{equation} \label{chain-for-n-her}
\Her{0} \subset \Her{1} \subset \Her{2} \subset \cdots \subset \Her{n} \subset \cdots \subset \Her{\infty},
\end{equation}
where $\Her{\infty}$ is the corresponding definition using the class $\FP{\infty}$. 

We have given examples of rings that are 0-hereditary, 1-hereditary and 2-hereditary. Next we show how to get non-commutative $n$-hereditary rings from a given $n$-hereditary ring. 

\begin{example} 
Let $n$ be an integer such that $n \geq 1$. Then $R$ is an $n$-hereditary ring if and only if $M_n(R)$ is an $n$-hereditary ring.
Indeed, the key idea is to observe that there is an equivalence of between the module category of $R$-mod and $M_n(R)$-mod (see  \cite[Theorem 17.20]{Lam}) and that every such equivalence preserve homological properties such as projective modules and exact sequences, and thus finitely $n$-presented modules. 
\end{example}
%
%
%
%
%
%
We also give an example which shows that there are rings that are not $\Her{\infty}$. But to do that we need the following result.

\begin{corollary} \label{charac-of-n-her-by-ideals}
Let $n \geq 0$. If $R$ is $n$-hereditary ring then every ideal $I \in \FP{n-1}$ is projective.
\end{corollary}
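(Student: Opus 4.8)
The plan is to apply Definition \ref{def-n-her} directly, with the finitely generated projective module taken to be $R$ itself. The key observation is that a left ideal $I$ is, by definition, a left submodule of the regular module ${}_R R$, and ${}_R R$ is finitely generated (by the identity $1$) and free, hence projective. So an ideal is nothing but a submodule of a particularly nice finitely generated projective module, and the hypothesis $I \in \FP{n-1}$ is exactly the presentation condition appearing in the definition of $n$-hereditary.

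Concretely, I would argue as follows. If $I \in \FP{n-1}$ is a left ideal, then $I$ is a finitely $(n-1)$-presented submodule of the finitely generated projective module $R$. By the very definition of an $n$-hereditary ring, every such submodule is projective; hence $I$ is projective, which is exactly the assertion of the corollary.

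There is essentially no obstacle here: the only point to verify is that Definition \ref{def-n-her} genuinely applies with $R$ playing the role of the ambient finitely generated projective module, and this is immediate. I would also note that the argument is uniform in $n \geq 0$. When $n = 0$, the convention that $\FP{-1}$ is the class of all $R$-modules reduces the statement to the familiar fact that over a (left) hereditary ring every left ideal is projective; when $n = 1$ it recovers that over a semi-hereditary ring every finitely generated ideal is projective.

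For completeness, I would remark that one could alternatively route the proof (for $n \geq 2$) through Lemma \ref{proj-dim-FPn-less-than-1}: from the short exact sequence $\ses{I}{R}{R/I}$ with $R \in \FP{\infty}$ and $I \in \FP{n-1}$ one gets $R/I \in \FP{n}$, whence $\pd{R/I} \leq 1$ forces the syzygy $I$ to be projective. However, the direct appeal to Definition \ref{def-n-her} is cleaner and handles all $n \geq 0$ at once, so that is the route I would record.
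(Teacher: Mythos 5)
Your proof is correct, and it takes a genuinely more direct route than the paper's. You observe that an ideal $I \in \FP{n-1}$ is literally a finitely $(n-1)$-presented submodule of the finitely generated projective module ${}_RR$, so Definition \ref{def-n-her} applies verbatim and yields projectivity of $I$, uniformly in $n \geq 0$ (the case $n=0$ being covered by the convention $\FP{-1} = \Rmod$). The paper instead splits into cases: for $n = 0, 1$ it cites the classical results of Cartan--Eilenberg and of Megibben, and for $n > 1$ it argues exactly along the lines of your closing remark --- from $\ses{I}{R}{R/I}$ it deduces $R/I \in \FP{n}$, invokes Lemma \ref{proj-dim-FPn-less-than-1} to get $\pd{R/I} \leq 1$, and concludes that the first syzygy $I$ is projective. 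Your argument is strictly more elementary: it needs no homological machinery, no case split, and no external citations, whereas the paper's detour through Lemma \ref{proj-dim-FPn-less-than-1} mainly serves to tie the corollary to the $\pd{\cdot} \leq 1$ characterization of $n$-hereditary rings used throughout the paper (and note that, thanks to the ``furthermore'' clause of that lemma, your alternative route in fact covers $n = 1$ as well, not just $n \geq 2$). A small side benefit of your phrasing is that it makes transparent why the paper's subsequent Question --- the converse of this corollary --- is the genuinely nontrivial direction: the corollary is just the specialization of Definition \ref{def-n-her} to the ambient module $R$, while the converse asks to pass from ideals back to finitely $(n-1)$-presented submodules of arbitrary finitely generated projectives.
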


\begin{proof}
The cases $n=0$ and $n=1$ are known and can be found in \cite{C-E} and \cite{megibben} respectively;  recall that we are allowing $\FP{-1} = \Rmod$. 

Now let $n>1$ and suppose $R$ is $n$-hereditary. Then for any ideal $I \in \FP{n-1}$, we have that $R/I \in \FP{n}$. Applying Lemma \ref{proj-dim-FPn-less-than-1} gives us that $\pd{R/I} \leq 1$, which implies that $I$ is projective.

\end{proof}

\begin{example}
Let $R=\Z/4\Z$. Then the ideal $I=2\Z/4\Z$ of $R$ is in $\FP{\infty}$ as the resolution $\cdots \to \Z/4\Z \xrightarrow{\cdot 2} \Z/4\Z \xrightarrow{\cdot 2} I \xrightarrow{} 0$  shows. However, the ideal $I$ is not projective, and thus by Corollary \ref{charac-of-n-her-by-ideals}, we see that $R$ is not $\Her{n}$ for any $n \geq 0$.
\end{example}

We still would like to find an explicit example of a ring that is $\Her{\infty}$, but not $\Her{n}$ for any $n\geq 0$.

Corollary \ref{charac-of-n-her-by-ideals} can be thought as a first step in a characterization of $n$-hereditary rings in terms of its ideals. Since as noted in the begining of this section the reciprocal of this corollary works for $n=0,1$, thus it is natural to ask the following.
\begin{question}
Is the reciprocal of Corollary \ref{charac-of-n-her-by-ideals} true? In other words, if every ideal $I \in \FP{n-1}$ is projective, then is it true that $R$ is $n$-hereditary?  
\end{question}

Also related to the idea of finitely $n$-presented modules, is the notion of $n$-coherent rings, which generalizes that of coherent rings. Recall that a ring $R$ is said to be left \textit{coherent} if every finitely generated left ideal of $R$ also is finitely presented.  Another equivalent definition for coherent ring is as follows: $R$  is a left coherent ring if, and only if,  every module in $\FP{1}$ is also in $\FP{2}$. 

\begin{definition}\label{def-n-coherent} 
A ring $R$ is \textit{left $n$-coherent} if $\FP{n} \subseteq \FP{n+1}$.
\end{definition}

Similarly, from now on, and unless otherwise noted, the term $n$-coherent ring will refer to left $n$-coherent ring. 
Thus coherent rings are just 1-coherent rings, and 0-coherent rings coincide with Noetherian rings.   The rings in Example \ref{2-coherent-ring-example} and Example \ref{2-her-ring} are a 2-coherent ring, however a proof of the latter is given in Proposition \ref{2-coherent-Bezout}. The following is an example of an $n$-coherent ring, for any $n \geq 1$.
\begin{example}\label{n-coh-ring}
Let $S=k[[\partial_1,\partial_2,\ldots,\partial_n]]$ be the power series over a collection of $n$ variables, and consider the $S$-module $M = k[x_1,x_2,\ldots,x_n]$ with a linearly extended $S$-action given by $\partial_i x_j = \delta_{ij}$. Consider now the ring $R = S \ltimes M$ given by the trivial extension of the ring $S$ by the $S$-module $M$, defined over the set  $R = \{ (s,m) : s \in S \text{ and } m \in M \}$ and with product given by $(s,m) \cdot (s',m') = (ss',sm' + s'm)$. 

Then the ring $R = k[[\partial_1,\partial_2,\ldots,\partial_n]] \ltimes k[x_1,x_2,\ldots,x_n]$ is $n$-coherent. This is a concrete example of a more general result of J. Roos \cite[Theorem A]{Roos}.
\end{example}

\begin{remark} Note that if $R$ is $n$-coherent, then it is also $k$-coherent, for all $k \geq n$.
Thus, if $\Coh{n}$  denotes the class of all $n$-coherent rings, and if by convention, we allow any ring  to be $\infty$-coherent, then we obtain the following chain: 
\begin{equation} \label{chain-of-coh}
\Coh{0} \subset \Coh{1} \subset \Coh{2} \subset \cdots \subset \Coh{n} \subset \cdots \subset \Coh{\infty}.
\end{equation}
\end{remark}

Unlike the situation for $n$-hereditary rings, we observe that any ring can be thought as an $\infty$-coherent ring. Furthermore, there are rings that are never $n$-coherent for any $n \geq 0$; see {\cite[Example 1.4]{BraPe}}.

The  following theorem states equivalent conditions for the $n$-coherence of a ring in terms of finitely $n$-presented modules.

\begin{theorem}[{\cite[Theorem 2.4 and Corollary 2.6]{BraPe}}] \label{characterization-of-n-coherent} 
Let $R$ be a ring and $n \geq 0$. The following are equivalent.
\begin{enumerate}
\item $R$ is $n$-coherent. \label{R-n-coh}

\item $\FP{n}$ is closed under kernels of epimorphisms. \label{FPn-thick}

\item $\FP{n} = \FP{\infty}$. \label{FPn=FPinfty} 

\end{enumerate} 
\end{theorem}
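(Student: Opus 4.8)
The plan is to prove the cycle \eqref{R-n-coh} $\implies$ \eqref{FPn=FPinfty} $\implies$ \eqref{FPn-thick} $\implies$ \eqref{R-n-coh}. The workhorse throughout is the standard \emph{syzygy lemma}: for $m \geq 1$ and any short exact sequence $\ses{\Omega M}{P}{M}$ with $P$ finitely generated projective, $M \in \FP{m}$ if and only if $M$ is finitely generated and $\Omega M \in \FP{m-1}$. The ``if'' direction is immediate by splicing a length-$(m-1)$ finitely generated projective resolution of $\Omega M$ onto $P \to M \to 0$; the ``only if'' direction follows from Schanuel's lemma together with the closure of $\FP{m-1}$ under extensions and direct summands recorded in Proposition \ref{properties_FPn}.

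For \eqref{R-n-coh} $\implies$ \eqref{FPn=FPinfty}, assume $\FP{n} \subseteq \FP{n+1}$ and fix $M \in \FP{n}$. I claim every syzygy $\Omega^k M$ lies in $\FP{n}$, by induction on $k$: we have $\Omega^0 M = M \in \FP{n}$, and if $\Omega^k M \in \FP{n}$ then $n$-coherence gives $\Omega^k M \in \FP{n+1}$, whence the syzygy lemma yields $\Omega^{k+1} M \in \FP{n}$. Each $\Omega^k M$ is in particular finitely generated, so choosing finitely generated projective covers at every stage produces a resolution of $M$ by finitely generated projectives; that is, $M \in \FP{\infty}$. Since the reverse inclusion $\FP{\infty} \subseteq \FP{n}$ is part of the chain \eqref{chain-of-FPn}, we conclude $\FP{n} = \FP{\infty}$.

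The implication \eqref{FPn=FPinfty} $\implies$ \eqref{FPn-thick} is immediate, since $\FP{\infty}$ is closed under kernels of epimorphisms by Theorem \ref{FPinfty-is-thick}. For \eqref{FPn-thick} $\implies$ \eqref{R-n-coh}, fix $M \in \FP{n}$; as $\FP{n} \subseteq \FP{0}$, the module $M$ is finitely generated, so there is an epimorphism $\pi \colon F_0 \twoheadrightarrow M$ with $F_0$ finitely generated free. Then $F_0$ is finitely generated projective, hence $F_0 \in \FP{\infty} \subseteq \FP{n}$, so $\pi$ is an epimorphism between objects of $\FP{n}$ and the hypothesis forces $\Omega M = \kker{\pi} \in \FP{n}$. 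By the syzygy lemma $M \in \FP{n+1}$, giving $\FP{n} \subseteq \FP{n+1}$, which is \eqref{R-n-coh}.

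I expect the only genuinely substantive step to be \eqref{R-n-coh} $\implies$ \eqref{FPn=FPinfty}, where the single-step closure $\FP{n} \subseteq \FP{n+1}$ must be bootstrapped into an infinite finitely generated projective resolution; the other two implications are short once Theorem \ref{FPinfty-is-thick} and the syzygy lemma are available. The main technical care lies in the syzygy lemma itself, namely the appeal to Schanuel's lemma to make membership of $\Omega M$ in $\FP{m-1}$ independent of the chosen projective cover.
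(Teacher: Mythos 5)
Your proof is correct. Note that the paper itself gives no proof of this statement: it is imported verbatim from \cite[Theorem 2.4 and Corollary 2.6]{BraPe}, so there is no in-paper argument to compare against; your cycle \eqref{R-n-coh} $\implies$ \eqref{FPn=FPinfty} $\implies$ \eqref{FPn-thick} $\implies$ \eqref{R-n-coh} is the standard route and is essentially the argument of the cited source. Each step checks out: the syzygy lemma's two directions (splicing, and Schanuel plus the closure of $\FP{m-1}$ under extensions and direct summands from Proposition \ref{properties_FPn}) are sound, the bootstrap in \eqref{R-n-coh} $\implies$ \eqref{FPn=FPinfty} correctly uses the lemma's independence of the chosen surjection so the induction survives arbitrary choices, and the remaining two implications are as short as you say. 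One terminological quibble: ``projective covers'' should be ``finitely generated free (or projective) surjections'' --- minimal projective covers need not exist over an arbitrary ring, and your argument neither needs nor uses minimality.
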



To end this section we establish a connection between $n$-hereditary rings and $n$-coherent rings.

\begin{corollary} \label{n-her-implies-n-coh}
Let $n\geq 1$. If $R$ is $n$-hereditary, then it is $n$-coherent.
\end{corollary}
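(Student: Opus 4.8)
The plan is to use the characterization of $n$-coherence furnished by Theorem \ref{characterization-of-n-coherent}: it suffices to prove that $\FP{n} = \FP{\infty}$. Since the inclusion $\FP{\infty} \subseteq \FP{n}$ always holds by the chain \eqref{chain-of-FPn}, the whole task reduces to establishing the reverse inclusion $\FP{n} \subseteq \FP{\infty}$, i.e. that every finitely $n$-presented module admits a resolution by finitely generated projectives.

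So let $M \in \FP{n}$. First I would pick a surjection $R^k \twoheadrightarrow M$ from a finitely generated free module and form the syzygy
\[
\ses{\Omega M}{R^k}{M},
\]
exactly as in the proof of Lemma \ref{proj-dim-FPn-less-than-1}. The standard syzygy bookkeeping gives $\Omega M \in \FP{n-1}$. Here the hypothesis $n \geq 1$ enters in an essential way: since $n-1 \geq 0$, the chain \eqref{chain-of-FPn} yields $\FP{n-1} \subseteq \FP{0}$, so $\Omega M$ is in particular finitely generated. Thus $\Omega M$ is a finitely $(n-1)$-presented submodule of the finitely generated projective module $R^k$, and the defining property of an $n$-hereditary ring (Definition \ref{def-n-her}) applies directly to conclude that $\Omega M$ is projective.

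At this point $\Omega M$ is finitely generated and projective, so the short exact sequence above is already a finite projective resolution of $M$ all of whose terms are finitely generated projective; padding on the left with zeros exhibits $M$ as finitely $\infty$-presented. Hence $\FP{n} \subseteq \FP{\infty}$, so $\FP{n} = \FP{\infty}$, and Theorem \ref{characterization-of-n-coherent} gives that $R$ is $n$-coherent. Note that this route uses the definition of $n$-hereditary ring directly rather than the projective-dimension reformulation of Lemma \ref{proj-dim-FPn-less-than-1}, which keeps the argument uniform across all $n \geq 1$.

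The argument is short, so there is no single hard computation; the one point that genuinely requires care is the role of the hypothesis $n \geq 1$. It is precisely this that forces $\Omega M$ to be finitely generated and hence to fall within the scope of Definition \ref{def-n-her}; without it (the case $n=0$, i.e. hereditary rings) the syzygy need only lie in $\FP{-1} = \Rmod$, carries no finiteness information, and the conclusion fails, consistent with the fact that hereditary rings need not be Noetherian. I would also double-check the claim $\Omega M \in \FP{n-1}$ for an arbitrary finitely generated free cover, which follows from a generalized Schanuel argument together with Proposition \ref{properties_FPn}, rather than only for the distinguished cover coming from a chosen $\FP{n}$-presentation.
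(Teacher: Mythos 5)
Your proof is correct and follows essentially the same route as the paper: both form the syzygy sequence $\ses{\Omega M}{F}{M}$ with $F$ finitely generated free, conclude that $\Omega M$ is finitely generated projective, and deduce $\FP{n} \subseteq \FP{\infty}$, which is $n$-coherence by Theorem \ref{characterization-of-n-coherent}. The only cosmetic difference is that you apply Definition \ref{def-n-her} to $\Omega M \in \FP{n-1}$ directly, whereas the paper routes through $\pd{M} \leq 1$ via Lemma \ref{proj-dim-FPn-less-than-1}, whose proof is the same syzygy argument; your added care about arbitrary free covers (via Schanuel) is sound but unnecessary, since the distinguished cover from an $\FP{n}$-presentation already suffices.
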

\begin{proof}
Suppose $M \in \FP{n}$. Then from Lemma \ref{proj-dim-FPn-less-than-1} we have that $\pd{M}\leq 1$.  Hence in the short exact sequence 
\[
0 \to \Omega M \to F \to M \to 0,
\]
with $F$ finitely generated and free, we get that the syzygy $\Omega M$ is also finitely generated and projective. Therefore $\Omega M \in \FP{\infty}$, giving us that $M \in \FP{\infty}$. 

\end{proof}
The case  $n=0$, would say that any hereditary ring is Noetherian, but this is not the case. Consider, $R=k\langle x,y\rangle$, the polynomial ring over a field $k$ in two nonconmuting variables; this is an (right and left) hereditary ring, but not Noetherian; see  {\cite[Example 4.12]{rotman}}.


\section{Relative homological algebra over $n$-hereditary rings} \label{Relative-Injective-and-Flat}

Having mentioned the class of finitely $n$-presented module, we discuss the relative homological algebra with respect to $\FP{n}$  and define the corresponding  relative injective modules and relative flat modules.

\begin{definition}  \label{Def-FPn-inj}
Let $R$ be a ring, $M \in \Rmod$ and $n \geq 0$ (including the case $n = \infty$).
\begin{enumerate}
\item \label{FPn-injective} 
We say that  $M$ is \textit{$\text{FP}_n$-injective} if $\Ext^1_R(F, M) = 0$ for all $F \in \FP{n}$. We denote by $\FPinj{n}$ the class of all $\text{FP}_n$-injective modules.

\item \label{FPn-flat} 
We say that  $M$ is \textit{\text{$\text{FP}_n$-flat}} if $\Tor_1^R(F, M) = 0$ for all $F \in \FP{n}$. We denote by $\FPflat{n}$ the class of all $\text{FP}_n$-flat modules.
\end{enumerate}
\end{definition}

With these definitions, $M$ is injective if, and only if, $M$ is $\text{FP}_0$-injective, and $M$ is FP-injective (as introduce by \cite{stenstrom}) if, and only if, $M$ is $FP_1$-injective. The usual flat modules coincide with the FP$_0$-flat modules. Given that any module is the direct limit of finitely $1$-presented modules, and that the functor $\Tor_1(-,M)$ commutes with direct limits, then we have that the FP$_1$-flat modules also coincide with the usual flat modules. 

From the descending chain of inclusions \eqref{chain-of-FPn}, we get the following ascending chains of inclusions: 

\begin{equation}\label{chain-of-FPn-inj}
\FPinj{0} \subseteq \FPinj{1} \subseteq \cdots \subseteq \FPinj{n} \subseteq \cdots \subseteq \FPinj{\infty}
\end{equation}
and 
\begin{equation}\label{chain-of-FPn-flat}
\FPflat{0} = \FPflat{1} \subseteq \cdots \subseteq \FPflat{n} \subseteq \cdots \subseteq \FPflat{\infty}.
\end{equation}

For the rest of this article and motivated by these last chains of inclusion, we focus on the case when $n>1$. The following two result appear in \cite{BraPe} and  list several properties about $\FPinj{n}$ and $\FPflat{n}$

\begin{proposition} \label{closure-properties} 
Let  $n>1$. The classes $\FPinj{n}$ and $\FPflat{n}$ are closed under:
\begin{enumerate}
\item Direct summands and extensions. 
\item Direct products and direct limits. 
\item Pure submodules and pure quotients.
\end{enumerate}

\end{proposition}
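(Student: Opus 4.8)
The plan is to treat the two classes in parallel, since each is defined by the vanishing of a single derived functor against every $F \in \FP{n}$, and to isolate exactly where the hypothesis $n>1$ is actually used. Throughout, write $\FPinj{n} = \{M : \Ext^1_R(F,M)=0 \ \forall F \in \FP{n}\}$ and $\FPflat{n} = \{M : \Tor^R_1(F,M)=0\ \forall F \in \FP{n}\}$. First I would dispatch direct summands and extensions, which hold for any such orthogonality class and need nothing about $n$: for a short exact sequence $\ses{A}{B}{C}$ and a fixed $F \in \FP{n}$, the long exact sequences
\[
\Ext^1_R(F,A) \to \Ext^1_R(F,B) \to \Ext^1_R(F,C), \qquad \Tor^R_1(F,A) \to \Tor^R_1(F,B) \to \Tor^R_1(F,C)
\]
show that vanishing of the two outer terms forces vanishing of the middle one, giving closure under extensions; closure under summands is the split case, using that $\Ext^1_R(F,-)$ and $\Tor^R_1(F,-)$ carry finite direct sums to direct sums.

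Next come products and direct limits, where I would point out that two of the four cases are formal and two genuinely need Theorem \ref{FPn_Ext_Tor}. Since $\Ext^1_R(F,-)$ always carries a product to the product of the values, and $\Tor^R_1(F,-)$ always commutes with direct limits, closure of $\FPinj{n}$ under products and of $\FPflat{n}$ under direct limits is immediate. The remaining two cases are where $n>1$ enters: for $F \in \FP{n}$ with $n \geq 2$ we have $F \in \FP{(n-1)+1}$ with $n-1 \geq 1$, so Theorem \ref{FPn_Ext_Tor} gives that $\Ext^1_R(F,-)$ commutes with direct limits and that $\Tor^R_1$ against $F$ commutes with direct products. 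Hence $\Ext^1_R(F, \varinjlim M_j) \cong \varinjlim \Ext^1_R(F,M_j) = 0$ yields closure of $\FPinj{n}$ under direct limits, and $\Tor^R_1(F, \prod_j M_j) \cong \prod_j \Tor^R_1(F,M_j)=0$ yields closure of $\FPflat{n}$ under direct products.

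Finally, for pure submodules and pure quotients I would use that a pure exact sequence $\ses{A}{B}{C}$ is a direct limit of split exact sequences $\ses{A_i}{B_i}{C_i}$. Applying $\Ext^\ast_R(F,-)$ (resp.\ $\Tor^R_\ast(F,-)$) to each split piece produces, in every degree, split short exact sequences such as $0 \to \Ext^1_R(F,A_i) \to \Ext^1_R(F,B_i) \to \Ext^1_R(F,C_i)\to 0$ and its Tor analogue; passing to the direct limit---using that $\Ext^1_R(F,-)$ commutes with direct limits for $F \in \FP{n}$ with $n \geq 2$, and that $\Tor^R_1(F,-)$ always does---gives the exact sequences $0 \to \Ext^1_R(F,A) \to \Ext^1_R(F,B) \to \Ext^1_R(F,C)\to 0$ and its Tor counterpart. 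If $B$ lies in the class, the middle term vanishes, forcing both outer terms to vanish, so $A$ and $C$ lie in the class as well.

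I expect the purity statement for $\FPinj{n}$ to be the main obstacle, precisely because closure under pure quotients cannot be read off the covariant long exact sequence alone (there $\Ext^1_R(F,C)$ only injects into $\Ext^2_R(F,A)$, which need not vanish); it is the commutation of $\Ext^1_R(F,-)$ with direct limits, guaranteed by Theorem \ref{FPn_Ext_Tor} exactly because $n>1$, that makes the direct-limit-of-splits argument go through for both the submodule and the quotient. The remaining routine inputs---that pure exact sequences are directed colimits of split ones, and that directed colimits of split short exact sequences remain exact---I would simply cite from the standard theory of purity.
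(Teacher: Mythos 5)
Your proof is correct, and since the paper states Proposition \ref{closure-properties} without proof---it is quoted from \cite{BraPe}---your argument is essentially the standard one underlying that reference: long exact sequences for extensions and summands, Theorem \ref{FPn_Ext_Tor} (with $n>1$ entering exactly where you say, via $1\leq n-1$) for the two nonformal cases of products and direct limits, and the presentation of a pure exact sequence as a directed colimit of split exact sequences, combined with exactness of directed colimits in $\mathrm{Ab}$, for the purity statements. The only bookkeeping point worth flagging is that the test modules for $\FPflat{n}$ are finitely $n$-presented \emph{right} $R$-modules, so the closure under products uses the left--right symmetric form of Theorem \ref{FPn_Ext_Tor} (which holds by the balance of $\Tor$); this is harmless, and your side observations---that the $\Tor$ half of the purity argument never needs $n>1$, and that pure quotients of $\FPinj{n}$ cannot be extracted from the covariant long exact sequence alone---are both accurate.
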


Given a left $R$-module $M$, recall that the character module is defined as the right $R$-module $M^+ = \Hom_{\Z}(M,\mathbb{Q} / \Z)$. Similarly, the character module of a right $R$-module $M$ is defined in the same way, and it is a left $R$-module that will be also denoted by $M^+$. The classes $\FPinj{n}$ and $\FPflat{n}$ relate well through the character modules. 

\begin{proposition} \label{FPnflat-iff-FPninj^+} 
Let $n>1$. 
\begin{enumerate}
\item $M \in \FPflat{n}$ if and only if $M^+  \in \FPinj{n}$. 
\item $M^+ \in \FPflat{n}$ if and only if $M  \in \FPinj{n}$.
\end{enumerate}
\end{proposition}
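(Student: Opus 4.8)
The plan is to prove both statements via the standard adjunction isomorphism relating $\Tor$ and $\Ext$ through the character functor, combined with the homological characterizations of $\FP{n}$ already available. The governing identity is the natural isomorphism
\[
\Tor_1^R(F,M)^+ \cong \Ext^1_R(F, M^+),
\]
valid for any finitely presented $F$ (and more generally whenever $F$ admits a projective resolution by finitely generated projectives, which is certainly the case for $F \in \FP{n}$ with $n > 1$). This is the relative-homological-algebra incarnation of the duality $\Hom_{\Z}(-\otimes_R M, \mathbb{Q}/\Z) \cong \Hom_R(-, \Hom_{\Z}(M, \mathbb{Q}/\Z))$ together with the exactness of $\Hom_{\Z}(-, \mathbb{Q}/\Z)$ as a functor out of abelian groups.

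For part (1), I would argue as follows. First I would record that $\mathbb{Q}/\Z$ is an injective cogenerator of abelian groups, so that a sequence of abelian groups is exact if and only if its image under $(-)^+$ is exact; in particular an abelian group $A$ vanishes if and only if $A^+ = 0$. Then, for any $F \in \FP{n}$, the isomorphism above gives $\Tor_1^R(F,M)^+ \cong \Ext^1_R(F,M^+)$. If $M \in \FPflat{n}$, then $\Tor_1^R(F,M) = 0$ for every $F \in \FP{n}$, hence $\Ext^1_R(F,M^+) \cong \Tor_1^R(F,M)^+ = 0$ for every such $F$, which is exactly the statement $M^+ \in \FPinj{n}$. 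Conversely, if $M^+ \in \FPinj{n}$, then $\Tor_1^R(F,M)^+ = 0$ for all $F \in \FP{n}$, and since $\mathbb{Q}/\Z$ is a cogenerator this forces $\Tor_1^R(F,M) = 0$, so $M \in \FPflat{n}$.

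Part (2) is asymmetric and requires a bit more care, since here $M$ is a right $R$-module (so that $M^+$ is a left $R$-module and $\FPflat{n}$ on the left side makes sense), or vice versa, and one must track sidedness through the same adjunction. The cleanest route is to apply the identity $\Tor_1^R(F, M^+) \cong \Ext^1_R(F, M)^+$ — obtained by the same $\Hom$–$\otimes$ adjunction read in the other variable — for $F \in \FP{n}$. Then $M \in \FPinj{n}$ means $\Ext^1_R(F,M) = 0$ for all $F \in \FP{n}$, whence $\Tor_1^R(F,M^+) \cong \Ext^1_R(F,M)^+ = 0$, giving $M^+ \in \FPflat{n}$; and the converse again uses that $(-)^+$ reflects vanishing. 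I expect the main obstacle to be bookkeeping rather than depth: one must verify that the natural isomorphisms genuinely hold at the level of $\Ext^1$ and $\Tor_1$ (not merely for the Hom/tensor themselves), and that the finiteness hypothesis $F \in \FP{n}$ with $n > 1$ legitimately supplies a resolution of $F$ by finitely generated projectives, so that the two functors may be computed from the same complex and the character functor may be pulled through the homology. Since $n > 1$ guarantees $F \in \FP{1}$, hence $F$ is finitely presented, the first-syzygy computation of $\Tor_1$ and $\Ext^1$ goes through cleanly, and the duality isomorphism applies without pathology.
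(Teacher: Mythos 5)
Your strategy --- converting $\Tor$-vanishing into $\Ext$-vanishing through the character functor and using that $\mathbb{Q}/\Z$ is an injective cogenerator of abelian groups, so that $(-)^+$ reflects vanishing --- is the standard one; the paper does not prove this proposition itself but quotes it from \cite{BraPe}, where exactly this duality argument is used. Your part (1) is correct, and in fact you are being unnecessarily cautious there: the isomorphism $\Ext^1_R(F,M^+)\cong \Tor_1^R(F,M)^+$ holds for \emph{every} module $F$, with no finiteness hypothesis at all, since it comes from applying the termwise adjunction $\Hom_{\Z}(P\otimes_R M,\mathbb{Q}/\Z)\cong\Hom_R(P,M^+)$ to any projective resolution $P_\bullet$ of $F$ and using exactness of $(-)^+$. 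So part (1) actually holds for all $n\geq 0$.

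Part (2) contains a genuine gap in the finiteness bookkeeping, precisely at the point you flagged as ``bookkeeping rather than depth.'' You justify the key identity $\Tor_1^R(F,M^+)\cong\Ext^1_R(F,M)^+$ by saying that $n>1$ guarantees $F\in\FP{1}$, hence $F$ is finitely presented, and that this makes ``the duality isomorphism apply without pathology.'' Finite presentation is \emph{not} enough, and the error proves too much: if it were enough, part (2) would hold for $n=1$, i.e.\ every FP-injective module would have a flat character module over every ring --- but by a theorem of Cheatham and Stone this implication characterizes left coherent rings and fails in general. Concretely, to compare $H_1\bigl(M^+\otimes_R P_\bullet\bigr)$ with $H_1\bigl(\Hom_R(P_\bullet,M)^+\bigr)$ one needs the natural map $M^+\otimes_R P_i\to\Hom_R(P_i,M)^+$ to be an isomorphism in degrees $0,1,2$ (degree-one homology sees the differential out of degree two), i.e.\ one needs $P_0,P_1,P_2$ finitely generated --- that is $F\in\FP{2}$, not $F\in\FP{1}$. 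In your first-syzygy formulation: from $\ses{K}{F_0}{F}$ with $F_0$ finitely generated free, the comparison requires $M^+\otimes_R K\cong\Hom_R(K,M)^+$, which needs $K$ finitely \emph{presented}; if $F$ is only finitely presented then $K$ is merely finitely generated, the natural map is only an epimorphism, and one obtains just a surjection $\Tor_1^R(F,M^+)\twoheadrightarrow\Ext^1_R(F,M)^+$, hence only the implication $M^+\in\FPflat{n}\Rightarrow M\in\FPinj{n}$ and not its converse. The repair is one line, and it is exactly where the hypothesis $n>1$ earns its keep: $F\in\FP{n}\subseteq\FP{2}$, so the syzygy $K$ lies in $\FP{1}$ and the isomorphism holds in the degrees needed. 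With that correction your argument is complete and coincides with the proof in the cited source.
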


We provide a result regarding lifting properties of $\FPinj{n}$.

\begin{proposition} \label{lift-of-FPn-inj}
Let $n \geq 0$.  $M \in \FPinj{n}$ if and only if for every diagram with $P' \in \FP{n-1}$ and $P$ finitely generated projective module, there is a homomorphism $P \xrightarrow{h} M$ such that $hg=f$.
\[
\begin{tikzpicture}
\node (P') at (0,0) {$P'$};
\node (P) at (1,0) {$P$};
\node (M) at (0,-1) {$M$};
\draw[->] (P') -- (M) node[pos=0.5,left]{\tiny $f$};  
\draw[right hook->] (P') -- (P) node[pos=0.5,above]{\tiny $g$};  
\draw[dashed,->] (P) -- (M) node[pos=0.5,right]{\tiny $h$};  
\end{tikzpicture}
\]
\end{proposition}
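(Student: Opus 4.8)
The plan is to prove the equivalence by characterizing $\FPinj{n}$ through the vanishing of $\Ext^1_R(F,M)$ for $F \in \FP{n}$ and translating this into the stated lifting property. The key observation is that a module $F \in \FP{n}$ fits into a short exact sequence $\ses{P'}{P}{F}$ with $P$ finitely generated projective and $P' \in \FP{n-1}$ (this is the syzygy $P' = \Omega F$, which lies in $\FP{n-1}$ by the definition of finitely $n$-presented modules). Conversely, every inclusion $g \colon P' \hookrightarrow P$ with $P$ finitely generated projective and $P' \in \FP{n-1}$ yields a cokernel $F := P/P'$ that belongs to $\FP{n}$, by Proposition \ref{properties_FPn} (closure under cokernels of monomorphisms) applied to the presentation of $P'$, or more directly since $P \to F$ extends a finite projective presentation of $P'$ by one step.

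First I would make the connection between the lifting diagram and $\Ext^1$ explicit. Given the short exact sequence $\ses{P'}{P}{F}$ with inclusion $g$, applying $\Hom_R(-,M)$ produces the exact sequence
\[
\Hom_R(P,M) \xrightarrow{g^*} \Hom_R(P',M) \to \Ext^1_R(F,M) \to \Ext^1_R(P,M).
\]
Since $P$ is projective, $\Ext^1_R(P,M) = 0$, so $\Ext^1_R(F,M) = \coker{g^*}$. Therefore $\Ext^1_R(F,M) = 0$ if and only if $g^*$ is surjective, which is exactly the statement that every $f \colon P' \to M$ factors as $f = hg$ for some $h \colon P \to M$. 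This single computation supplies both directions once the correspondence between the $F$'s and the diagrams is in place.

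For the forward direction, I would take $M \in \FPinj{n}$ and an arbitrary diagram with $g \colon P' \hookrightarrow P$ as stated; setting $F = \coker{g} = P/P' \in \FP{n}$, the hypothesis gives $\Ext^1_R(F,M) = 0$, so by the computation above $g^*$ is surjective and the desired $h$ exists. For the converse, I would take any $F \in \FP{n}$, choose a presentation $\ses{\Omega F}{P}{F}$ with $P$ finitely generated projective and $\Omega F \in \FP{n-1}$, and observe that the lifting hypothesis applied with $P' = \Omega F$ and the inclusion $g$ shows $g^*$ is surjective, hence $\Ext^1_R(F,M) = 0$; since $F$ was arbitrary in $\FP{n}$, we conclude $M \in \FPinj{n}$.

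The main thing to verify carefully is that the two short exact sequences in play genuinely produce the correct classes: namely that $\Omega F \in \FP{n-1}$ whenever $F \in \FP{n}$ (immediate from the definition of finitely $n$-presented modules, since a finite projective presentation of $F$ of length $n$ restricts to one of length $n-1$ for the first syzygy), and that $P/P' \in \FP{n}$ whenever $P$ is finitely generated projective and $P' \in \FP{n-1}$. The latter is the only genuinely non-routine point, but it follows because concatenating a length-$(n-1)$ finite projective presentation of $P'$ with the map $P \to P/P'$ yields a length-$n$ finite projective presentation of $P/P'$. Once these two membership facts are recorded, the proof reduces entirely to the $\Ext^1$-vanishing computation, which is the routine homological step.
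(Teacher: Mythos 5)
Your proof is correct and follows essentially the same route as the paper: both directions reduce to the surjectivity of $\Hom_R(P,M) \to \Hom_R(P',M)$, obtained by applying $\Hom_R(-,M)$ to the short exact sequence $\ses{P'}{P}{P/P'}$ and using $\Ext^1_R(P,M)=0$. The only difference is that you spell out the two membership facts ($\Omega F \in \FP{n-1}$ for $F \in \FP{n}$, and $P/P' \in \FP{n}$ for $P' \in \FP{n-1}$ inside a finitely generated projective $P$) that the paper's proof uses implicitly.
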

\begin{proof}
Suppose that $M$ has this lifting property. Let $F \in \FP{n}$, and consider the short exact sequence $\ses{P'}{P}{F}$, with $P$ finitely generated free and $P' \in \FP{n-1}$.  Then applying $\Hom_R(-,M)$ to this short exact sequence gives that $\Ext^1(F,M)=0$, since $\Hom(P,M) \to \Hom(P',M)$ is an epimorphism. Hence $M \in \FPinj{n}$.

Conversely the argument works similarlly, since for $M \in \FPinj{n}$ we have that $\Ext^1(P/P',M)=0$, given that $P/P' \in \FP{n}$, and so $\Hom(P,M) \to \Hom(P',M)$ is an epimorphism.
\end{proof}

The next definition has recently been introduced by M. P\'erez and T. Zhao in their study of syzygies and further generalizations to chain complexes; see \cite{Pe-Zhao}. Nevertheless we recall these definitions here and  investigate its relation to $n$-hereditary rings.   

\begin{definition}
Let $M \in \Rmod$ and $n\geq 0$. The \emph{FP$_{n}$-injective dimension of $M$}, which will be denote by $\FPnid{M}$, is given by the smallest integer $k\geq 0$ such that $\Ext^{k+1}_R(F,M)=0$ for every $F\in \FP{n}$. 
Similarly,  the \emph{FP$_{n}$-flat dimension of $M$}, which will be denote it by $\FPnfd{n}$, is given by the smallest integer $k\geq 0$ such that $\Tor^{R}_{k+1}(F,M)=0$, for every $F\in \FP{n}$.
\end{definition}

With this definition at hand and motivated by the work of H. Cartan and S. Eilenberg \cite{C-E} and C. Megibben \cite{megibben}, we obtain a similar result regarding when  $\FPinj{n}$ is closed under quotients and also investigate when $\FPflat{n}$ is closed under subobjects.

\begin{theorem}\label{teo. big}
Let $n \geq 1$. Then the following are equivalent.
\begin{enumerate}
\item \label{R-is-n-her-ring} $R$ is an $n$-hereditary ring.
\item \label{quotients-FP_n-inj} Quotients of  FP$_n$-injective modules are  FP$_n$-injective.
\item \label{quotients-injective} Quotients of  injective modules are FP$_n$-injective. 
\item \label{submodule-FP_n-flat} Submodules of  FP$_{n}$-flat modules are FP$_{n}$-flat.
\item \label{submodule-flat} Submodules of flat modules are FP$_n$-flat.
\item \label{FP_n-flat-dim} $\FPnfd{M} \leq 1$, for every $M \in \Rmod$.
\item \label{FP_n-inj-dim}$\FPnid{M} \leq 1$, for every $M \in \Rmod$.
\end{enumerate}
Furthermore, for $n=0$ we have that  \eqref{R-is-n-her-ring} $\iff$ \eqref{quotients-FP_n-inj} $\iff$ \eqref{quotients-injective} $\iff$ \eqref{FP_n-inj-dim} and that \eqref{submodule-FP_n-flat} $\iff$ \eqref{submodule-flat} $\iff$ \eqref{FP_n-flat-dim}. 
\end{theorem}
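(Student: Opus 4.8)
### Proof Strategy

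The plan is to prove the theorem by establishing a cycle of implications among the seven conditions, using the character module duality from Proposition~\ref{FPnflat-iff-FPninj^+} to ferry between the injective-side statements \eqref{quotients-FP_n-inj}, \eqref{quotients-injective}, \eqref{FP_n-inj-dim} and the flat-side statements \eqref{submodule-FP_n-flat}, \eqref{submodule-flat}, \eqref{FP_n-flat-dim}. The backbone of the argument is the dimension characterization of $n$-hereditary rings in Lemma~\ref{proj-dim-FPn-less-than-1}: for $n \geq 2$ the condition $R$ is $n$-hereditary is equivalent to $\pd{M} \leq 1$ for all $M \in \FP{n}$, equivalently $\wdim{M} \leq 1$ for all $M \in \FP{n}$. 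I would first observe that \eqref{R-is-n-her-ring} $\iff$ \eqref{FP_n-inj-dim} and \eqref{R-is-n-her-ring} $\iff$ \eqref{FP_n-flat-dim} are essentially restatements: by the long exact sequence for $\Ext$, the condition $\FPnid{M} \leq 1$ for every $M$ says $\Ext^2_R(F,M)=0$ for all $F \in \FP{n}$ and all $M$, which by dimension shifting is equivalent to $\pd{F} \leq 1$ for all $F \in \FP{n}$; dually $\FPnfd{M}\leq 1$ for every $M$ is equivalent to $\wdim{F}\leq 1$ for all $F \in \FP{n}$.

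Next I would close the injective side. For \eqref{FP_n-inj-dim} $\implies$ \eqref{quotients-injective}: given an injective module $E$ and a quotient $E \twoheadrightarrow Q$ with kernel $K$, the short exact sequence $\ses{K}{E}{Q}$ yields, for any $F \in \FP{n}$, an exact piece $\Ext^1_R(F,E) \to \Ext^1_R(F,Q) \to \Ext^2_R(F,K)$; the outer terms vanish because $E$ is injective and because $\FPnid{K}\leq 1$, so $\Ext^1_R(F,Q)=0$ and $Q$ is FP$_n$-injective. The implication \eqref{quotients-injective} $\implies$ \eqref{quotients-FP_n-inj} follows by embedding an FP$_n$-injective module $M$ into its injective hull $E(M)$, writing a quotient of $M$ as a subquotient of $E(M)$, and invoking the closure of the Ext-vanishing condition; alternatively one runs the same long exact sequence argument directly. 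Then \eqref{quotients-FP_n-inj} $\implies$ \eqref{FP_n-inj-dim} is immediate, since the cosyzygy in an injective coresolution of any $M$ is a quotient of an injective, hence FP$_n$-injective, forcing $\Ext^2_R(F,M)=0$.

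For the flat side I would run the mirror-image argument using $\Tor$ and submodules, then bridge the two sides with character modules: a submodule inclusion $N \hookrightarrow M$ dualizes to a quotient $M^+ \twoheadrightarrow N^+$, and by Proposition~\ref{FPnflat-iff-FPninj^+} the statement $N$ is FP$_n$-flat is equivalent to $N^+$ being FP$_n$-injective, so \eqref{quotients-injective} transfers to \eqref{submodule-flat} and back. I expect the main obstacle to be the boundary case $n=1$ and the separate bookkeeping for $n=0$ stated in the ``Furthermore'' clause. For $n=1$ the flat conditions \eqref{submodule-FP_n-flat}, \eqref{submodule-flat}, \eqref{FP_n-flat-dim} read in terms of ordinary flat modules (since $\FPflat{1}=\FPflat{0}$), and the equivalence with $R$ being $1$-hereditary must be drawn from the last sentence of Lemma~\ref{proj-dim-FPn-less-than-1} rather than from its weak-dimension clause, which is only asserted for $n \geq 2$; care is needed because passing from $\wdim{N}\leq 1$ back to projectivity uses $N \in \FP{1}$ together with \cite[Proposition 3.2.12]{E-J}, and this is exactly where the hypothesis $n \geq 2$ was used in the lemma. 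For $n=0$ the character-module duality is unavailable in the same packaged form, so the injective chain \eqref{R-is-n-her-ring} $\iff$ \eqref{quotients-FP_n-inj} $\iff$ \eqref{quotients-injective} $\iff$ \eqref{FP_n-inj-dim} and the flat chain \eqref{submodule-FP_n-flat} $\iff$ \eqref{submodule-flat} $\iff$ \eqref{FP_n-flat-dim} must be proved as two independent cycles, each by the same long-exact-sequence mechanism, which is why the theorem only claims the two groups separately and not their equivalence in that case.
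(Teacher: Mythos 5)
Your proposal follows essentially the same route as the paper's proof: the long-exact-sequence and dimension-shifting arguments on the injective side (your cycle \eqref{FP_n-inj-dim} $\implies$ \eqref{quotients-injective} $\implies$ \eqref{quotients-FP_n-inj} $\implies$ \eqref{FP_n-inj-dim} is the paper's cycle \eqref{R-is-n-her-ring} $\implies$ \eqref{quotients-FP_n-inj} $\implies$ \eqref{quotients-injective} $\implies$ \eqref{FP_n-inj-dim} $\implies$ \eqref{R-is-n-her-ring} read in a different order), the free-presentation $\Tor$ argument on the flat side, the character-module bridge via Proposition \ref{FPnflat-iff-FPninj^+} for $n>1$, and two independent cycles when the duality is unavailable. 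One local misstep: your first suggestion for \eqref{quotients-injective} $\implies$ \eqref{quotients-FP_n-inj}, realizing a quotient of $M$ as a subquotient of $E(M)$, does not work as stated, because a quotient $M/K$ is only a \emph{submodule} of the FP$_n$-injective module $E(M)/K$, and $\FPinj{n}$ is not closed under submodules. Your fallback --- run the long exact sequence $\Ext^1_R(F,M) \to \Ext^1_R(F,M/K) \to \Ext^2_R(F,K)$ and kill the last term --- is the correct fix, but note that killing $\Ext^2_R(F,K)$ for arbitrary $K$ is exactly condition \eqref{FP_n-inj-dim}, obtained by applying \eqref{quotients-injective} to the cosyzygy $E(K)/K$; so the honest order of implications is the paper's.

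The genuine problem is the case $n=1$, which you rightly flag as the main obstacle but propose to repair ``with care'' from the last sentence of Lemma \ref{proj-dim-FPn-less-than-1}. That cannot succeed: the last sentence of the lemma only rescues the injective chain, while for $n=1$ the flat-side conditions \eqref{submodule-FP_n-flat}, \eqref{submodule-flat}, \eqref{FP_n-flat-dim} say precisely that every finitely presented module has weak dimension at most $1$, i.e.\ $\text{gl.wd}(R)\leq 1$, and this does \emph{not} imply semi-hereditary. Indeed, the paper's own Example \ref{2-her-ring} (Vasconcelos's ring $\Z \oplus \bigoplus_{i\geq 1}\Z/2\Z$, with $\text{gl.wd}(R)\leq 1$ but not $1$-hereditary by Proposition \ref{not-1-her}) is a counterexample to \eqref{FP_n-flat-dim} $\implies$ \eqref{R-is-n-her-ring} at $n=1$. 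You should be aware that the paper's written proof never bridges the two sides at $n=1$ either: it proves \eqref{submodule-FP_n-flat} $\iff$ \eqref{quotients-FP_n-inj} only after declaring ``for the rest of the proof, we suppose that $n>1$,'' so the seven-way equivalence is actually established only for $n>1$, and the hypothesis ``$n\geq 1$'' in the statement overstates both what is proved and what is true. The correct resolution is not to patch the bridge but to restrict the full equivalence to $n>1$ and place $n=1$ alongside $n=0$ in the ``Furthermore'' clause; your instinct located the weak point, but the proposed repair would fail.
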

\begin{proof}
\eqref{R-is-n-her-ring} $\implies$ \eqref{quotients-FP_n-inj}. Suppose $M \in \FPinj{n}$ and consider a short exact sequence with $M$ in the middle:
\[
0 \to K \to M \to M' \to 0.
\] 
For each $F \in \FP{n}$, we consider  the following exact sequence
\[
\cdots \to \Ext^1_R(F,M) \to \Ext^1_R(F,M') \to \Ext^2_R(F,K) \to \cdots.
\]
If $R$ is $n$-hereditary, then $\pd{F} \leq 1$, and therefore $\Ext^2_R(F,K) = 0$. Also since $M \in \FPinj{n}$, then $\Ext^1_R(F,M) =0$. This gives us that $\Ext^1_R(F,M') =0$ for any $F \in \FP{n}$.

\eqref{quotients-FP_n-inj} $\implies$ \eqref{quotients-injective}. This implication is easy since every injective module is in $\FPinj{n}$. 

\eqref{quotients-injective} $\implies$ \eqref{FP_n-inj-dim}. Let $M \in \Rmod$ and denote its injective envelope by $E(M)$. Then, the short exact sequence 
\[
0 \to M \to E(M) \to E(M)/N \to 0
\]
gives us that  $\Ext^{2}_R(F,M)=0$, for all $F\in \FP{n}$, since by hypothesis $E(M)/M\in \FPinj{n}$.

\eqref{FP_n-inj-dim} $\implies$ \eqref{R-is-n-her-ring}. Suppose that $N \in \FP{n-1}$ is a submodule of a finitely generated projective module $P$. Hence from the short exact sequence
\[
0 \rightarrow N \rightarrow P \rightarrow P/N \rightarrow 0,
\]
we have that $P/N \in \FP{n}$. Now, to this short exact sequence, apply the functor $\Hom_{R}(-,M)$,  for any $M \in \Rmod$, and obtain the following exact sequence in $\Rmod$:
\[
\cdots \to \Ext^{1}_R(P,M)=0 \to \Ext^{1}_R(N,M) \to \Ext^{2}_R(P/N,M) \to \cdots.
\]
By hypothesis, we have that $\FPnid{M} \leq 1$, hence $\Ext^{2}_R(P/N,M)=0$. Since $P$ is a projective module, we get that  $\Ext^{1}_R(N,M)=0$ and therefore that $N$ is a projective module also.

\eqref{submodule-FP_n-flat} $\implies$ \eqref{submodule-flat}. Since every flat module is in $\FPflat{n}$, we get this immediately.

\eqref{submodule-flat} $\implies$ \eqref{FP_n-flat-dim}. Let $M \in \Rmod$, and consider a short exact sequence 
\[
0 \rightarrow K \rightarrow \bigoplus_{h \in\Hom_{R}(R,M)} R_h \rightarrow M \rightarrow 0,
\]
where $R_h = R$. By hypothesis, we have that $K\in \FPflat{n}$, and so from this sequence we obtain that $\Tor^{R}_{2}(F,M)=0$, for every $F\in \FP{n}$. 

\eqref{FP_n-flat-dim} $\implies$ \eqref{submodule-FP_n-flat}. Suppose that we have an exact sequence 
\[
0 \to N \to M \to M/N \to 0
\]
with $M \in \FPflat{n}$. By assumption, we know that $\Tor^R_{2}(F,M/N)=0$, for every $F\in \FP{n}$, and so  from this short exact sequence we have $\Tor^R_1(F,N)=0$, for every $F\in \FP{n}$.

For the rest of the proof, we suppose that $n>1$. 

\eqref{submodule-FP_n-flat} $\implies$ \eqref{quotients-FP_n-inj}. Suppose that we have an exact sequence $B \to C \to 0$ with $B \in \FPinj{n}$. Then we get an exact sequence $0 \to C^+ \to B^+$, and Proposition \ref{FPnflat-iff-FPninj^+} tells us that $B^+ \in \FPflat{n}$. From our hypothesis we get  $C^+ \in \FPflat{n}$ and therefore by Proposition \ref{FPnflat-iff-FPninj^+} again, we obtain that $C \in \FPinj{n}$.

\eqref{quotients-FP_n-inj} $\implies$ \eqref{submodule-FP_n-flat}. Dual to the proof of  \eqref{submodule-FP_n-flat} $\implies$ \eqref{quotients-FP_n-inj}.


\end{proof}

We would like to point out a new characterization of $n$-coherent rings in terms of the class $\FPinj{n}$. Indeed, several such characterization already are  given in \cite[Theorem 5.5]{BraPe}. The following result also  appears in that same article, but not precisely in the following format.

\begin{corollary}[{\cite[Lemma 5.2]{BraPe}}] \label{Ext-with-FPn-inj}
$R$ is $n$-coherent if and only if $\Ext^1_R(M,N) = 0$ for all $M \in \FP{n}$ and all $N \in \FPinj{n+1}$.
\end{corollary}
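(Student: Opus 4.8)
The plan is to recognize the displayed vanishing condition as an equality of the two injective orthogonal classes and then to match it with the characterizations of $n$-coherence already available. Since $\FPinj{n}=\{N:\Ext^1_R(F,N)=0\text{ for all }F\in\FP{n}\}$, for a fixed module $N\in\FPinj{n+1}$ the requirement that $\Ext^1_R(M,N)=0$ for every $M\in\FP{n}$ says exactly that $N\in\FPinj{n}$. Hence the condition in the statement is equivalent to the inclusion $\FPinj{n+1}\subseteq\FPinj{n}$, and together with the reverse inclusion recorded in the chain \eqref{chain-of-FPn-inj} it is equivalent to the single equality $\FPinj{n}=\FPinj{n+1}$. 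The whole corollary thus reduces to showing that $R$ is $n$-coherent if and only if $\FPinj{n}=\FPinj{n+1}$.

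The forward implication I would settle directly. If $R$ is $n$-coherent, then by Theorem \ref{characterization-of-n-coherent} we have $\FP{n}=\FP{\infty}$, and since $\FP{\infty}\subseteq\FP{n+1}\subseteq\FP{n}$ this forces $\FP{n}=\FP{n+1}$. Equal classes have equal $\Ext^1$-orthogonals, so $\FPinj{n}=\FPinj{n+1}$; concretely, any $M\in\FP{n}=\FP{n+1}$ and any $N\in\FPinj{n+1}$ give $\Ext^1_R(M,N)=0$ by the very definition of $\FPinj{n+1}$.

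For the reverse implication, assume $\FPinj{n}=\FPinj{n+1}$ and fix $F\in\FP{n}$; I would aim to prove $F\in\FP{n+1}$, which by Theorem \ref{characterization-of-n-coherent} is enough. Choosing a presentation $\ses{\Omega F}{P}{F}$ with $P$ finitely generated free gives $\Omega F\in\FP{n-1}$, and $F\in\FP{n+1}$ is equivalent to $\Omega F\in\FP{n}$. Applying $\Hom_R(-,N)$ to this sequence for $N\in\FPinj{n+1}$ and using $\Ext^1_R(P,N)=0$, one sees that $\Hom_R(P,N)\to\Hom_R(\Omega F,N)$ is onto precisely when $\Ext^1_R(F,N)=0$; and this last vanishing holds because $F\in\FP{n}$ while $N\in\FPinj{n+1}=\FPinj{n}$. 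Thus every homomorphism $\Omega F\to N$ extends along $P$ for every $N\in\FPinj{n+1}$, which is the extension property that $\Omega F$ would enjoy automatically (via the lifting criterion of Proposition \ref{lift-of-FPn-inj} applied at level $n+1$) if it were already finitely $n$-presented. It remains to run this implication backwards and deduce $\Omega F\in\FP{n}$ from the extension property.

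This backward passage is the main obstacle: recovering the equality of the classes $\FP{n}$ and $\FP{n+1}$ from the coincidence of their orthogonals $\FPinj{n}=\FPinj{n+1}$ is an instance of \emph{reverse} orthogonality, and the obstruction to it lives entirely in the top homological degree, where $F\in\FP{n+1}$ is detected by the commutation of $\Tor^R_{n}(-,F)$ with direct products in Theorem \ref{FPn_Ext_Tor} rather than by any $\Ext^1$ statement. When $n>1$ the character-module duality of Proposition \ref{FPnflat-iff-FPninj^+} lets one phrase the same content on the flat side, turning $\FPinj{n}=\FPinj{n+1}$ into $\FPflat{n}=\FPflat{n+1}$ via \eqref{chain-of-FPn-flat}, which is often the more convenient formulation. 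Either way, the technical heart is exactly what is proved in \cite[Lemma 5.2]{BraPe} (compare the family of characterizations in \cite[Theorem 5.5]{BraPe}); the paragraphs above are how I would organize the reduction to that core.
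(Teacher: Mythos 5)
The paper offers no internal proof of this corollary at all---it is imported verbatim from \cite[Lemma 5.2]{BraPe} (with the remark that it appears there ``not precisely in the following format'')---and your proposal ultimately rests on that same citation for the hard direction, so it matches the paper's approach; moreover, your reduction of the displayed condition to $\FPinj{n}=\FPinj{n+1}$ via the chain \eqref{chain-of-FPn-inj}, and your proof of the forward implication from Theorem \ref{characterization-of-n-coherent} (since $\FP{\infty}\subseteq\FP{n+1}\subseteq\FP{n}$ forces $\FP{n}=\FP{n+1}$ and hence equal $\Ext^1$-orthogonals), are both correct. Your diagnosis of the converse is also accurate and honestly flagged: the surjectivity of $\Hom_R(P,N)\to\Hom_R(\Omega F,N)$ for all $N\in\FPinj{n+1}$ is merely a restatement of the hypothesis $\Ext^1_R(F,N)=0$, and the quantifiers in Proposition \ref{lift-of-FPn-inj} run the wrong way to extract the finiteness conclusion $\Omega F\in\FP{n}$ from an orthogonality statement, so that direction genuinely requires the detection argument of \cite[Lemma 5.2]{BraPe}---which is exactly, and only, what the paper itself invokes.
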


This characterization, given in terms of the class $\FPinj{n}$, makes us wonder if perhaps a similar characterization of $n$-coherent rings can be given just in terms of injective modules. Indeed, this is what we do next, however first we introduce some notation. 

Let $I$ be any injective cogenerator in $\Rmod$ (for example $I = \Hom_{\mathbb Z}(R,\mathbb Q / \mathbb Z)$, which can be thought as a left $R$-module, as explain in the paragraph after Proposition \ref{FPnflat-iff-FPninj^+}) and define the class $\mathcal{I}$ as follows: 
\[
\mathcal{I} = \left\{ \prod_{h \in \Hom_R(M,I)} I_h : \text{where $I_h = I$ and } M \in \Rmod \right\}.
\]
Given a class of modules $\mathcal A$, we denote by $\varinjlim A$ the direct limit closure in $\Rmod$ of the  class $\mathcal A$. We denote by $\Omega^{-i}(\mathcal A)$ the class of all the $i$-th cozysygies of injective corresolutions from objects in $\mathcal A$.

\begin{theorem}\label{n-coherent rings}
Let $R$ be a ring, $n > 0$ and $\mathcal{I}$ as describe above. Then the following are equivalent. 
\begin{enumerate}
\item $R$ is $n$-coherent.   \label{n-coh-lim-1}
\item $\Ext^{n}_R(M,X) = 0$ for all $M\in \FP{n}$ and all $X \in \varinjlim \mathcal{I}$. \label{n-coh-lim-2}
\item $\Omega^{-n+1}(\varinjlim \rm{Im}(\Psi)) \subseteq \FPinj{n}$. \label{n-coh-lim-3}
\end{enumerate}
\end{theorem}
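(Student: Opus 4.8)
The plan is to show that all three conditions are different encodings of the single assertion that $\Ext^n_R(M,-)$ commutes with direct limits for every $M \in \FP{n}$. By Theorem \ref{characterization-of-n-coherent}, $R$ is $n$-coherent precisely when $\FP{n} \subseteq \FP{n+1}$, and by Theorem \ref{FPn_Ext_Tor} a module $M \in \FP{n}$ lies in $\FP{n+1}$ if and only if $\Ext^i_R(M,-)$ commutes with direct limits for $0 \le i \le n$. Since $M \in \FP{n}$ already guarantees this for $0 \le i \le n-1$ (Theorem \ref{FPn_Ext_Tor} applied with index $n-1$), the whole statement reduces to detecting the direct-limit preservation of the single functor $\Ext^n_R(M,-)$, and the class $\mathcal{I}$ serves as a set of test objects for this property.

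First I would dispatch the two easy links. For \eqref{n-coh-lim-1} $\Rightarrow$ \eqref{n-coh-lim-2}: if $R$ is $n$-coherent then $M \in \FP{n} = \FP{n+1}$, so $\Ext^n_R(M,-)$ commutes with direct limits; writing $X = \varinjlim_\lambda J_\lambda$ with $J_\lambda \in \mathcal{I}$ and noting that each $J_\lambda$, being a product of copies of the injective module $I$, is injective, I obtain $\Ext^n_R(M,X) = \varinjlim_\lambda \Ext^n_R(M,J_\lambda) = 0$ for $n \ge 1$. The equivalence \eqref{n-coh-lim-2} $\iff$ \eqref{n-coh-lim-3} is then a pure dimension shift: for $X$ with injective coresolution $0 \to X \to E^0 \to E^1 \to \cdots$ and $F \in \FP{n}$, peeling off the injectives $E^0,\dots,E^{n-2}$ gives $\Ext^n_R(F,X) \cong \Ext^1_R\big(F,\Omega^{-n+1}(X)\big)$, so by Definition \ref{Def-FPn-inj} one has $\Omega^{-n+1}(X) \in \FPinj{n}$ if and only if $\Ext^n_R(F,X) = 0$ for all $F \in \FP{n}$. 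As $\FPinj{n}$ is closed under summands (Proposition \ref{closure-properties}) this is independent of the chosen coresolution, and since the class $\varinjlim \im{\Psi}$ consists exactly of the direct limits of the cogenerator products $\mathcal{I}$, condition \eqref{n-coh-lim-3} says precisely that the $\Ext$-vanishing of \eqref{n-coh-lim-2} holds.

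The substance of the theorem is \eqref{n-coh-lim-2} $\Rightarrow$ \eqref{n-coh-lim-1}, and this is where I expect the real work, carried out by the Grothendieck-category appendix, to lie. Fix $M \in \FP{n}$ and a directed system $\{Y_\mu\}$ with $Y = \varinjlim_\mu Y_\mu$; I must promote ``$\Ext^n_R(M,-)$ vanishes on $\varinjlim \mathcal{I}$'' to ``$\Ext^n_R(M,-)$ preserves this limit''. The device is the \emph{functorial} injective coresolution furnished by the cogenerator: the evaluation $\Psi_Z \colon Z \to \prod_{h \in \Hom_R(Z,I)} I_h$ is a natural monomorphism into an injective object of $\mathcal{I}$, and iterating cokernels produces a coresolution $\mathbb{I}^\bullet(Z)$ natural in $Z$, with each term $\mathbb{I}^p(Z) \in \mathcal{I}$. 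Applying $\varinjlim_\mu$ termwise and using exactness of direct limits yields a coresolution $\mathbb{L}^\bullet := \varinjlim_\mu \mathbb{I}^\bullet(Y_\mu)$ of $Y$ whose terms $\mathbb{L}^p = \varinjlim_\mu \mathbb{I}^p(Y_\mu)$ lie in $\varinjlim \mathcal{I}$.

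Finally I would feed $\mathbb{L}^\bullet$ into the hyper-$\Ext$ spectral sequence $E_1^{p,q} = \Ext^q_R(M,\mathbb{L}^p) \Rightarrow \Ext^{p+q}_R(M,Y)$. For $1 \le q \le n-1$ the term $\Ext^q_R(M,\mathbb{L}^p)$ vanishes because $\Ext^q_R(M,-)$ commutes with direct limits (as $M \in \FP{n}$) and each $\mathbb{I}^p(Y_\mu)$ is injective; for $q = n$ it vanishes by hypothesis \eqref{n-coh-lim-2}. Hence every entry on the antidiagonal $p+q=n$ except $E_2^{n,0}$ dies, no differential touches $E^{n,0}$, and the edge map identifies $\Ext^n_R(M,Y)$ with $E_2^{n,0} = H^n\Hom_R(M,\mathbb{L}^\bullet) = \varinjlim_\mu \Ext^n_R(M,Y_\mu)$, the last equality because $\Hom_R(M,-)$ commutes with direct limits ($M$ is finitely presented). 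Thus $\Ext^n_R(M,-)$ commutes with direct limits, so $M \in \FP{n+1}$ by Theorem \ref{FPn_Ext_Tor} and $R$ is $n$-coherent. The main obstacle is the clean construction of the functorial coresolution together with the convergence and edge-map bookkeeping of the spectral sequence, and it is precisely these points that I expect the Grothendieck-category appendix to isolate and prove in the requisite generality.
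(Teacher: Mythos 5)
Your proof is correct, and its outer architecture coincides with the paper's: both reduce everything to the assertion that $\Ext^n_R(M,-)$ preserves direct limits for $M\in\FP{n}$, both get \eqref{n-coh-lim-1} $\Rightarrow$ \eqref{n-coh-lim-2} from injectivity of products of the cogenerator, and both treat \eqref{n-coh-lim-2} $\iff$ \eqref{n-coh-lim-3} as a pure dimension shift. Where you genuinely diverge is the hard step \eqref{n-coh-lim-2} $\Rightarrow$ \eqref{n-coh-lim-1}. The paper's Appendix~\ref{Grothendieck} (Theorem \ref{FP_n-lim-im}) never builds the full coresolution: it uses a \emph{single} step of the functorial embedding, namely the short exact sequences $0\to M_\lambda \to \Psi(M_\lambda)\to \coker{\iota_{M_\lambda}}\to 0$, passes to the direct limit, and compares the two resulting four-term $\Ext$ sequences by a cokernel (five-lemma) argument, invoking the hypothesis exactly twice: $\Ext^{n-1}_R(M,\Psi(M_\lambda))=0$ by injectivity and $\Ext^{n-1}_R(M,\varinjlim\Psi(M_\lambda))=0$ because $\varinjlim\Psi(M_\lambda)\in\varinjlim\im{\Psi}$. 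You instead iterate $\Psi$ into a functorial injective coresolution, take the termwise colimit $\mathbb{L}^\bullet$, and run the hyper-$\Ext$ spectral sequence $E_1^{p,q}=\Ext^q_R(M,\mathbb{L}^p)$; your vanishing pattern is right (for $1\le q\le n-1$ by direct-limit preservation for $M\in\FP{n}$ plus injectivity of each $\mathbb{I}^p(Y_\mu)$, for $q=n$ by hypothesis since $\mathbb{L}^p\in\varinjlim\mathcal{I}$), the degeneration along $p+q=n$ checks out, and the identification $H^n\Hom_R(M,\mathbb{L}^\bullet)\cong\varinjlim\Ext^n_R(M,Y_\mu)$ correctly uses that $\Hom_R(M,-)$ commutes with direct limits. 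The one point you should make explicit is that the resulting isomorphism is the \emph{canonical} comparison map --- an abstract isomorphism for each system would not by itself feed into Theorem \ref{FPn_Ext_Tor}; this does hold here, by naturality of the spectral sequence applied to the canonical morphisms $\mathbb{I}^\bullet(Y_\mu)\to\mathbb{L}^\bullet$, but it is exactly the bookkeeping the paper's one-step argument avoids. In exchange, your route packages all the dimension shifting into one diagram and removes the inductive flavor of the appendix, while the paper's buys elementarity (no spectral sequences, no convergence or edge-map issues) and a sharper biconditional statement (Theorem \ref{FP_n-lim-im}), which via Corollary \ref{FP_n-dir-lim-characterization} delivers \eqref{n-coh-lim-1} $\iff$ \eqref{n-coh-lim-2} in both directions at once.
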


A key result for the proof of this theorem is given in general in the Appendix \ref{Grothendieck} section and we refer to it in the proof given below.

\begin{proof}
\eqref{n-coh-lim-1} $\iff$ \eqref{n-coh-lim-2}.
Let $M \in \FP{n}$. From Corollary \ref{FP_n-dir-lim-characterization} we get that $M\in\FP{n+1}$ if and only if $\varinjlim \text{Im}(\Psi) \subseteq \text{Ker}(\text{Ext}^{n}_R(M,-))$. Since in this case $\im{\Psi} = \mathcal{I}$, then $\FP{n}\subseteq \FP{n+1}$ if and only if $\Ext^{n}_R(M,X)=0$, for all $M\in \FP{n}$ and $X \in \varinjlim \mathcal{I}$. 

\eqref{n-coh-lim-2} $\iff$ \eqref{n-coh-lim-3}. This follows from dimension shifting.
\end{proof}

A  quick application of this theorem gives the following result.
\begin{corollary}
If the injective dimension of the class $\varinjlim \text{Im}(\Psi)$ is at most $n$,  then $R$ is $n+1$-coherent. 
\end{corollary}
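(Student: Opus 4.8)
The plan is to read this off directly from the equivalence \eqref{n-coh-lim-1} $\iff$ \eqref{n-coh-lim-2} of Theorem \ref{n-coherent rings}, but applied at index $n+1$ rather than at $n$. First I would recall from the proof of that theorem that, for the injective cogenerator in play, one has $\im{\Psi} = \mathcal{I}$; thus the hypothesis says precisely that every object of $\varinjlim \mathcal{I}$ has injective dimension at most $n$.

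Next I would translate this dimension bound into the cohomological vanishing required by the theorem. If $X \in \varinjlim \mathcal{I}$ has injective dimension at most $n$, then $\Ext^{n+1}_R(M,X)=0$ for every module $M$, and in particular for every $M \in \FP{n+1}$. This is exactly condition \eqref{n-coh-lim-2} of Theorem \ref{n-coherent rings} read at index $n+1$, which is legitimate since $n+1 > 0$. Invoking the equivalence \eqref{n-coh-lim-1} $\iff$ \eqref{n-coh-lim-2} at that index then yields that $R$ is $(n+1)$-coherent.

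There is no real obstacle here beyond bookkeeping; the one point deserving care is the index shift. The injective-dimension hypothesis controls exactly $\Ext^{n+1}$, and this is precisely the Ext-group whose vanishing Theorem \ref{n-coherent rings} demands one step up, at index $n+1$. One should also make sure the class $\varinjlim \text{Im}(\Psi)$ appearing in the hypothesis is identified with $\varinjlim \mathcal{I}$ through the same equality $\im{\Psi} = \mathcal{I}$ used inside the proof of the theorem.
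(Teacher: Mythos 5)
Your proof is correct and is essentially the paper's own argument: the paper offers no written proof beyond calling the corollary ``a quick application'' of Theorem \ref{n-coherent rings}, and that application is exactly what you carry out --- identify $\varinjlim \im{\Psi}$ with $\varinjlim \mathcal{I}$, note that injective dimension at most $n$ gives $\Ext^{n+1}_R(M,X)=0$ for all $M \in \FP{n+1}$ and $X \in \varinjlim \mathcal{I}$, and invoke the equivalence \eqref{n-coh-lim-1} $\iff$ \eqref{n-coh-lim-2} at index $n+1$. You also correctly flag the only delicate point, namely that applying the theorem at index $n+1$ is legitimate because $n+1>0$.
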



\section{Torsion pairs and $n$-hereditary rings} \label{sec-torsion}

As an application, we see that over $n$-hereditary ring the classes $\FPinj{n}$ and $\FPflat{n}$ define torsion classes and torsion-free classes respectively; this allows us to introduce new torsion pairs. Our approach to torsion pairs is that of B. Stenstr{\"o}m \cite{rings-of-quot}, and so is the general terminology used in the section.

\begin{definition}
A \emph{torsion pair} of a  (co)complete and locally small abelian category $\A$, is a pair $(\T,\F)$ of classes of $\A$ such that:
\begin{enumerate}
\item $\Hom_{\A}(T,F)=0$ for all $T \in \T$ and $F \in \F$.
\item If $\Hom_{\A}(C,F) = 0$ for all $F \in \F$, then $C \in \T$.
\item If $\Hom_{\A}(T,C) = 0$ for all $T \in \T$, then $C \in \F$.
\end{enumerate}
\end{definition}
In this case $\T$ is called a \emph{torsion class} and $\F$ is called a \emph{torsion-free class}. The pair $(\T,\F)$ is called  \emph{hereditary} if $\T$ is closed under subobjects. 

Given a class $\C$ of object in $\A$, we define
\[
\C^{\perp} = \{  X \in \A : \Hom_{\A}(C,X) = 0 \text{ for all } C \in \C \}  
\]
and similarly define
\[
{}^{\perp}\C = \{  X \in \A : \Hom_{\A}(X,C) = 0 \text{ for all } C \in \C \}.  
\]
This way, for any class $\C$ of $\A$, the pair $({}^{\perp}(\C^{\perp}),\C^{\perp})$ is a torsion pair.

\begin{proposition}{{\cite[Theorem VI.2.1 and Proposition VI.2.2]{rings-of-quot}}} \label{torsion-free}
Let $\T$ and $\F$ be classes of a (co)complete, locally small abelian category $\A$.
\begin{enumerate}
\item $\T$ is a torsion class for some torsion pair if and only if $\T$ is closed under quotients, coproducts and extensions.
\item $\F$ is a torsion-free class for some torsion pair if and only if $\F$ is closed under subobjects, products and extensions.
\end{enumerate}
\end{proposition}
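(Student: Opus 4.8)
The plan is to prove both parts by exhibiting the torsion pair generated by $\T$ (respectively cogenerated by $\F$) and then invoking duality, so I concentrate on part (1). For the necessity direction, I would observe that if $(\T,\F)$ is a torsion pair then axioms (2) and (3) force $\T = {}^{\perp}\F$ and $\F = \T^{\perp}$. Closure of a left-orthogonal class ${}^{\perp}\F$ under the three operations then follows formally from the behaviour of the contravariant functor $\Hom_{\A}(-,F)$: it is left exact, so it carries an epimorphism $T \twoheadrightarrow T'$ to a monomorphism $\Hom_{\A}(T',F) \hookrightarrow \Hom_{\A}(T,F)$, giving closure under quotients; it sends coproducts to products, giving closure under coproducts; and applied to a short exact sequence $\ses{T'}{T}{T''}$ it yields a left-exact sequence whose two outer terms vanish, forcing the middle term to vanish as well, giving closure under extensions.

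For the sufficiency direction, which I expect to be the crux, I would assume $\T$ is closed under quotients, coproducts and extensions, set $\F := \T^{\perp}$, and construct a torsion radical. Using local smallness, for each object $C$ the subobjects of $C$ lying in $\T$ form a \emph{set}; using cocompleteness I would form the image $t(C)$ of the induced map from their coproduct into $C$. Since $t(C)$ is a quotient of a coproduct of objects of $\T$, it lies in $\T$, and by construction it is the largest subobject of $C$ that belongs to $\T$. The key claim is that $C/t(C) \in \F$. Indeed, any nonzero map $T \to C/t(C)$ with $T \in \T$ would have image a nonzero subobject of $C/t(C)$ lying in $\T$ (closure under quotients); pulling this image back along the epimorphism $C \twoheadrightarrow C/t(C)$ produces an extension of an object of $\T$ by $t(C) \in \T$, hence (closure under extensions) a subobject of $C$ in $\T$ strictly larger than $t(C)$, a contradiction. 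Thus $\Hom_{\A}(T, C/t(C)) = 0$ for all $T \in \T$, i.e.\ $C/t(C) \in \F$.

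With the radical in hand, verifying the torsion-pair axioms is routine. Axiom (1) and axiom (3) both hold by the definition $\F = \T^{\perp}$. For axiom (2), if $\Hom_{\A}(C,F)=0$ for all $F \in \F$, then in particular the canonical epimorphism $C \to C/t(C)$ is the zero map, since $C/t(C) \in \F$; hence $C/t(C)=0$ and $C = t(C) \in \T$. This establishes part (1). I would then deduce part (2) by duality: a torsion-free class in $\A$ is exactly a torsion class in the opposite category $\A^{\mathrm{op}}$, and passing to $\A^{\mathrm{op}}$ interchanges subobjects with quotients and products with coproducts while preserving extensions, so the characterization in (2) is precisely the image of the characterization in (1).

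The main obstacle throughout is the sufficiency argument of part (1), and specifically the verification that $C/t(C)$ is torsion-free; this is exactly where the (co)completeness and local-smallness hypotheses are essential, as they guarantee that the torsion radical $t(C)$ exists as a genuine subobject of $C$ rather than merely a formal sum over a proper class.
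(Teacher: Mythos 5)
Your proposal is correct, and it is essentially the standard argument: the paper gives no proof of this proposition but cites it to Stenstr\"om \cite[Theorem VI.2.1 and Proposition VI.2.2]{rings-of-quot}, whose proof proceeds exactly as yours does, via orthogonality for necessity and the construction of the torsion radical $t(C)$ (as the largest torsion subobject, using local smallness and cocompleteness) for sufficiency, with part (2) obtained by duality. The one point worth making explicit in the duality step is that the opposite category $\A^{\mathrm{op}}$ inherits all the hypotheses, which holds since in an abelian category quotient objects correspond to subobjects via kernels, so local smallness passes to $\A^{\mathrm{op}}$.
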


Let $n>1$, and note that the classes $\FPinj{n}$  and $\FPflat{n}$ in $\Rmod$ are closed under direct sums and extensions. Given that $R$ is $n$-hereditary if and only if $\FPinj{n}$ is closed under quotients or $\FPflat{n}$ is closed under submodules (see Theorem \ref{teo. big}), then in combination with Proposition \ref{torsion-free} we get the following result.

\begin{theorem} \label{FPn-inj-and-FPn-flat-are-torsion-pairs}
Let $n>1$. The following statements are equivalent: 
\begin{enumerate}
\item $R$ is an $n$-hereditary ring. 
\item The pair $(\FPinj{n},\FPinj{n}^{\perp})$ is a torsion pair.  
\item The pair $({}^{\perp}\FPflat{n},\FPflat{n})$ is a torsion pair.
\end{enumerate} 
\end{theorem}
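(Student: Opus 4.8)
The plan is to read off all three equivalences directly from the closure-property dictionary of Proposition~\ref{torsion-free}, supplying it with two ingredients that are already in place: the \emph{unconditional} closure properties of $\FPinj{n}$ and $\FPflat{n}$ recorded in Proposition~\ref{closure-properties} (valid for $n>1$), and the characterization of $n$-hereditariness via closure under quotients, respectively submodules, furnished by Theorem~\ref{teo. big}. Since for $n>1$ all of the genuine homological content has been isolated in those earlier statements, I expect this argument to be an assembly rather than a computation, and I would organize it as the two separate equivalences $(1)\iff(2)$ and $(1)\iff(3)$, with statement~(1) serving as the common hub.

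For $(1)\iff(2)$ I would first observe that, by Proposition~\ref{closure-properties} (and as noted in the paragraph preceding the theorem), the class $\FPinj{n}$ is closed under coproducts and extensions with no hypothesis on $R$ whatsoever. Proposition~\ref{torsion-free}(1) then says that $\FPinj{n}$ is the torsion class of \emph{some} torsion pair precisely when it is in addition closed under quotients. By the equivalence \eqref{R-is-n-her-ring}~$\iff$~\eqref{quotients-FP_n-inj} of Theorem~\ref{teo. big}, that extra closure is exactly the statement that $R$ is $n$-hereditary. It remains to identify ``$\FPinj{n}$ is a torsion class'' with ``$(\FPinj{n},\FPinj{n}^{\perp})$ is a torsion pair'': this is forced by the definition, since for any torsion pair $(\T,\F)$ axiom~(3) gives $\F=\T^{\perp}$, so whenever $\FPinj{n}$ admits \emph{any} torsion-free partner that partner must be $\FPinj{n}^{\perp}$. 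This closes $(1)\iff(2)$.

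For $(1)\iff(3)$ the bookkeeping is dual. By Proposition~\ref{closure-properties}, $\FPflat{n}$ is always closed under products and extensions, so Proposition~\ref{torsion-free}(2) makes it the torsion-free class of some torsion pair exactly when it is moreover closed under subobjects, and the equivalence \eqref{R-is-n-her-ring}~$\iff$~\eqref{submodule-FP_n-flat} of Theorem~\ref{teo. big} identifies this last closure with $R$ being $n$-hereditary. The corresponding identification ``$\FPflat{n}$ is a torsion-free class'' $\iff$ ``$({}^{\perp}\FPflat{n},\FPflat{n})$ is a torsion pair'' is again immediate: for any torsion pair $(\T,\F)$ axiom~(2) gives $\T={}^{\perp}\F$, so the torsion partner of a torsion-free $\FPflat{n}$ is forced to be ${}^{\perp}\FPflat{n}$. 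This yields $(1)\iff(3)$, and with it the full equivalence of the three statements.

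The only point that warrants explicit care — and the sole place an objection could surface — is the transition from ``$\T$ is a torsion class (resp.\ $\F$ is a torsion-free class) for some torsion pair'' to the assertion that the particular orthogonal pair is a torsion pair. As indicated above, I would dispatch this straight from axioms~(2) and~(3) of the definition, which pin down $\T={}^{\perp}\F$ and $\F=\T^{\perp}$ for \emph{every} torsion pair; the remark already in the text that $({}^{\perp}(\C^{\perp}),\C^{\perp})$ is always a torsion pair serves as a consistency check. Beyond this routine identification no new argument is required, so I do not anticipate a real obstacle in the proof.
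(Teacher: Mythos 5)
Your proof is correct and takes essentially the same route as the paper, whose entire argument is the paragraph preceding the theorem: combine the unconditional closure of $\FPinj{n}$ and $\FPflat{n}$ under coproducts (resp.\ products) and extensions from Proposition~\ref{closure-properties} with the equivalences of Theorem~\ref{teo. big} characterizing $n$-hereditariness via closure under quotients (resp.\ submodules), and then invoke Proposition~\ref{torsion-free}. Your explicit verification that the partner class is forced to be $\FPinj{n}^{\perp}$ (resp.\ ${}^{\perp}\FPflat{n}$) by axioms (2) and (3) of the definition of a torsion pair is a detail the paper leaves implicit, and it is handled correctly.
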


Next we give several definitions available in the literature regarding tilting and cotilting modules (see \cite{Gobel}, \cite{Handbook})

Let $T\in \Rmod$. We will say that $T$ is a \emph{$1$-tilting} $R$-module if the following assertions hold:
\begin{enumerate}
\item $T$ has projective dimension less or equal than 1.
\item $\Ext^{i}_R(T,T^{(I)})=0$, for each integer $i\geq 1$ and all sets $I$.
\item There exists an exact sequence $\ses{R}{T_0}{T_1}$ such that $T_i$ is isomorphic to a direct summands of copies of $T$, for each $i=0,1$.
\end{enumerate}

If $T$ is 1-tilting $R$-module, then the pair $(\kker{\Ext^{1}_R(T,-)},\kker{\Hom_R(T,-))}$ is a torsion pair in $\Rmod$ which is called the \emph{1-tilting torsion pair associated to $T$}, and the class $\kker{\Ext^{1}_R(T,-)}$ is called the \emph{1-tilting class associated to $T$}.

Dually, $C$ is a \emph{1-cotilting} $R$-module if it satisfies the following conditions:

\begin{enumerate}
\item $C$ has injective dimension less or equal than 1.
\item $\Ext^{i}_{R}(C^{I},C)=0$, for each integer $i\geq 1$ and all sets $I$.
\item There exists an injective cogenerator $Q$ of $\Rmod$ and there exists an exact sequence $\ses{C_1}{C_0}{Q}$ such that $C_i$ is isomorphic to a direct summands of a direct products of  copies of $C$, for each $i=0,1$.
\end{enumerate}

Similarly, the pair $(\kker{\Hom_R(-,C)} , \kker{\Ext^{1}_R(-,C))} $ is a torsion pair in $\Rmod$,  called the \emph{1-cotilting torsion pair associated to $C$} whenever $C$ is 1-cotilting $R$-module. The class $\kker{\Ext^{1}_R(-,C)}$ is called the \emph{1-cotilting class associated to $C$}.

We say that $\C$ is a 1-tilting (respectively 1-cotilting) class if there is some tilting (repectively cotilting) module $M$ such that $\C = \kker{\Ext^1_R(M,-)}$ (respectively $\C = \kker{\Ext^1_R(-,M)}$).

The next theorem follows as an application of \cite[Theorem 2.2]{AHT}. For completion we record a weak version of \cite[Theorem 2.2]{AHT} as the following proposition.

\begin{proposition} \label{AHT}
Every resolving subclass $\C$ of finitely generated modules of projective dimension at most $1$ gives rise to a 1-tilting class of $R$-modules by assigning $\C$ to $\kker{\Ext^1_R(\C,-)}$.
\end{proposition}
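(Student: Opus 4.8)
The plan is to reduce the statement to the cited theorem of Angeleri Hügel, Herbera and Trlifaj by first replacing the (possibly proper) class $\C$ with a set and then feeding it into the tilting cotorsion-pair machinery. First I would note, exactly as in Remark \ref{FPn-set}, that the finitely generated modules in $\C$ admit a set $\mathcal S$ of representatives: every finitely generated module has cardinality at most $\max\{\aleph_0, \text{Card}(R)\}$, so there are only set-many isomorphism types. Since $\Ext^1_R(-,M)$ depends only on the isomorphism class of its first argument, this gives
\[
\kker{\Ext^1_R(\C,-)} = \bigcap_{S \in \mathcal S}\kker{\Ext^1_R(S,-)} =: \mathcal B,
\]
so it suffices to prove that $\mathcal B$ is a $1$-tilting class.

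Next I would invoke the Eklof--Trlifaj theorem. Because $\mathcal S$ is a \emph{set}, the cotorsion pair $(\mathcal A,\mathcal B)$ generated by $\mathcal S$, where $\mathcal A = \{X : \Ext^1_R(X,B)=0 \text{ for all } B \in \mathcal B\}$, is complete, and $\mathcal A$ consists of the direct summands of modules filtered by $\mathcal S \cup \{R\}$. Here is where the hypotheses on $\C$ enter: since $\pd{S} \leq 1$ for every $S \in \mathcal S$ and $\pd{R}=0$, Eklof's lemma shows that every such filtered module has projective dimension at most $1$, whence $\mathcal A \subseteq \{X : \pd{X} \leq 1\}$; moreover, as $\C$ is resolving it contains the finitely generated projectives, so $R \in \mathcal A$, and $(\mathcal A,\mathcal B)$ is a tilting cotorsion pair. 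Completeness then furnishes, after iterating special preenvelopes starting from $R$, an exact sequence $\ses{R}{T_0}{T_1}$ with $T_0, T_1 \in \mathcal A \cap \mathcal B$; setting $T = T_0 \oplus T_1$, one checks that $T$ satisfies the three defining conditions of a $1$-tilting module and that $\kker{\Ext^1_R(T,-)} = \mathcal B$.

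The main obstacle is this last verification, namely that the module $T$ assembled from the completed cotorsion pair is genuinely $1$-tilting with associated class exactly $\mathcal B$; this is precisely the content of \cite[Theorem 2.2]{AHT}, which packages the passage from a resolving class of modules of projective dimension at most $1$ to an honest tilting module. Rather than reproving that passage, I would verify only that $\C$ meets its hypotheses --- resolving, finitely generated, and $\pd{M}\le 1$ for all $M \in \C$ --- and that the tilting class it produces is the Ext-orthogonal $\mathcal B$, so that the proposition follows at once. The one genuinely delicate point to flag, as opposed to grind through, is that the cited theorem is phrased in a finitely presented (rather than merely finitely generated) setting; since only the direction producing a tilting class from $\C$ is needed here, the reduction to the set $\mathcal S$ together with the pd$\,\le 1$ bound already suffices, and the finitely generated hypothesis is used only through the representative-set argument above.
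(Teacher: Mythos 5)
Your overall route coincides with the paper's: the paper offers no independent proof of Proposition \ref{AHT} at all---it is recorded explicitly as a weak version of \cite[Theorem 2.2]{AHT}---and your argument, after sketching the Eklof--Trlifaj machinery that underlies that theorem, likewise delegates the essential passage to the same citation. The sketch itself is sound in outline: the set $\mathcal{S}$ of representatives (as in Remark \ref{FPn-set}), completeness of the cotorsion pair generated by a set, Eklof's lemma forcing $\mathcal{A} \subseteq \{X : \pd{X} \leq 1\}$, and the special preenvelope $\ses{R}{T_0}{T_1}$ with $T = T_0 \oplus T_1$ are exactly the standard ingredients behind the cited result.

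The one place you go astray is precisely the point you flagged at the end. The mismatch between \emph{finitely generated} (the proposition) and \emph{finitely presented} (the setting of \cite{AHT}) is \emph{not} repaired by ``the reduction to the set $\mathcal{S}$ together with the pd $\leq 1$ bound.'' Those two facts alone do not supply what your own construction needs, namely that $\mathcal{B} = \kker{\Ext^1_R(\C,-)}$ is closed under arbitrary direct sums: this closure is what makes $T^{(I)} \in \mathcal{B}$, hence $\Ext^1_R(T, T^{(I)}) = 0$, and it requires $\Ext^1_R(S,-)$ to commute with direct sums. That can fail for a merely finitely generated $S$ with $\pd{S} \leq 1$ whose first syzygy $\Omega S$ is projective but not finitely generated, since then $\Hom_R(\Omega S, -)$ need not commute with sums. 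What actually closes the gap is the resolving hypothesis itself: $\C$ contains the finitely generated projectives and is closed under kernels of epimorphisms in $\C$, so for $M \in \C$ and an epimorphism $P \twoheadrightarrow M$ with $P$ finitely generated projective, the kernel lies in $\C$ and is therefore finitely generated, and by Schanuel's lemma together with $\pd{M} \leq 1$ it is finitely generated projective. Hence $\C \subseteq \FP{\infty}$, every member is in particular finitely presented, and by Theorem \ref{FPn_Ext_Tor} the functor $\Ext^1_R(M,-)$ commutes with direct limits, giving the direct-sum closure of $\mathcal{B}$. With that observation substituted for your final paragraph, the hypotheses of \cite[Theorem 2.2]{AHT} are genuinely met and the rest of your reduction goes through.
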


Here we say that a class of modules is \textit{resolving} if it  is closed under extensions, direct summands, kernels of epimorphisms in that class and contains the finitely generated projective modules. For example, the class of $\FP{\infty}$ is resolving.

\begin{theorem}
Let $n>1$. The following statements are equivalent:
\begin{enumerate}
\item $R$ an $n$-hereditary ring. \label{R-is-n-her-tilting}
\item $\FPinj{n}$ is a $1$-tilting class. \label{1-tilt}
\item $\FPflat{n}$ is a $1$-cotilting class. \label{1-cotilt}
\end{enumerate} 
\end{theorem}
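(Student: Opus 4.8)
The plan is to run the cycle \eqref{R-is-n-her-tilting} $\Rightarrow$ \eqref{1-tilt} $\Rightarrow$ \eqref{R-is-n-her-tilting} and \eqref{R-is-n-her-tilting} $\Rightarrow$ \eqref{1-cotilt} $\Rightarrow$ \eqref{R-is-n-her-tilting}, using Proposition \ref{AHT} to manufacture the tilting module and Theorem \ref{teo. big} together with Proposition \ref{torsion-free} to convert closure properties back into $n$-hereditariness.

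For \eqref{R-is-n-her-tilting} $\Rightarrow$ \eqref{1-tilt} I would argue as follows. Since an $n$-hereditary ring is $n$-coherent (Corollary \ref{n-her-implies-n-coh}), Theorem \ref{characterization-of-n-coherent} gives $\FP{n}=\FP{\infty}$; thus $\FP{n}$ is a resolving class and, by Lemma \ref{proj-dim-FPn-less-than-1}, consists of finitely generated modules of projective dimension at most $1$. Proposition \ref{AHT} applied to $\C=\FP{n}$ then shows that $\kker{\Ext^1_R(\FP{n},-)}$ is a $1$-tilting class, and this class is precisely $\FPinj{n}$ by Definition \ref{Def-FPn-inj}. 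For the converse \eqref{1-tilt} $\Rightarrow$ \eqref{R-is-n-her-tilting}, recall that a $1$-tilting class is the torsion class of its associated tilting torsion pair and is therefore closed under quotients by Proposition \ref{torsion-free}; hence $\FPinj{n}$ is closed under quotients, and Theorem \ref{teo. big} (\eqref{quotients-FP_n-inj} $\Leftrightarrow$ \eqref{R-is-n-her-ring}) yields that $R$ is $n$-hereditary. Symmetrically, \eqref{1-cotilt} $\Rightarrow$ \eqref{R-is-n-her-tilting} is immediate: a $1$-cotilting class is the torsion-free class of its cotilting torsion pair, hence closed under submodules (Proposition \ref{torsion-free}), and Theorem \ref{teo. big} (\eqref{submodule-FP_n-flat} $\Leftrightarrow$ \eqref{R-is-n-her-ring}) forces $R$ to be $n$-hereditary.

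This leaves the substantial implication \eqref{R-is-n-her-tilting} $\Rightarrow$ \eqref{1-cotilt}, for which I would first record a left--right symmetry. If $R$ is left $n$-hereditary then, by Theorem \ref{teo. big} \eqref{FP_n-flat-dim}, $\FPnfd{M}\le 1$ for every left module $M$; that is, $\Tor_2^R(F,M)=0$ for every right module $F\in\FP{n}$ and every $M$, so each such $F$ has weak dimension at most $1$. Writing $\ses{K}{P}{F}$ with $P$ finitely generated free and $K\in\FP{n-1}\subseteq\FP{1}$ (using $n>1$), the syzygy $K$ is flat and finitely presented, hence projective, so $\pd{F}\le 1$ and $R$ is right $n$-hereditary as well. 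Applying the already-established implication \eqref{R-is-n-her-tilting} $\Rightarrow$ \eqref{1-tilt} on the right (the arguments above being side-symmetric) produces a right $1$-tilting module $T$ with tilting class $\kker{\Ext^1_R(\FP{n},-)}$. I would then propose $C:=T^+$ as the sought left $1$-cotilting module. The injective-dimension axiom is automatic from the natural isomorphism $\Ext^i_R(X,T^+)\cong\Tor_i^R(T,X)^+$ (for left $X$), which gives $\mathrm{id}(T^+)=\wdim{T}\le\pd{T}\le 1$; the same isomorphism identifies $\kker{\Ext^1_R(-,C)}$ with $\{X:\Tor_1^R(T,X)=0\}$, and closure of $\FPflat{n}$ under products (Proposition \ref{closure-properties}) turns the membership $C^I\in\FPflat{n}$ into the required vanishing $\Ext^1_R(C^I,C)=0$.

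The main obstacle is twofold and rests on the finite/cofinite-type duality between tilting and cotilting classes. First, one must match $\{X:\Tor_1^R(T,X)=0\}$ with $\FPflat{n}=\{X:\Tor_1^R(F,X)=0 \text{ for all } F\in\FP{n}\}$: this is exactly the assertion that the tilting module $T$ is of finite type with defining set equivalent to $\FP{n}$, so that $T$ and $\FP{n}$ have the same Tor-orthogonal, and here the commutation of $\Tor_1^R(-,X)$ with direct limits together with the finite-type structure of $T$ must be used. Second, one must supply the cotilting exact sequence $\ses{C_1}{C_0}{Q}$: I would obtain it by applying $(-)^+$ to the defining tilting sequence $\ses{R}{T_0}{T_1}$ of $T$, taking the injective cogenerator to be $Q=R^+$ and tracking how $\mathrm{Add}\,T$ and its product closure behave under the character functor. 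I expect this bookkeeping---verifying that the finite-type witnesses and the $\mathrm{Add}$/product memberships survive dualization---to be the delicate step; the rest of the argument reduces cleanly to Proposition \ref{AHT}, Theorem \ref{teo. big}, and the homological identity $\Ext^i_R(X,M^+)\cong\Tor_i^R(M,X)^+$.
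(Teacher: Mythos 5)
Your argument agrees with the paper's on two of the three edges and diverges on the third. The paper proves \eqref{R-is-n-her-tilting} $\Rightarrow$ \eqref{1-tilt} exactly as you do (Corollary \ref{n-her-implies-n-coh}, then $\kker{\Ext^1_R(\FP{\infty},-)}=\FPinj{\infty}=\FPinj{n}$ via Proposition \ref{AHT} and Theorem \ref{characterization-of-n-coherent}), and proves \eqref{1-cotilt} $\Rightarrow$ \eqref{R-is-n-her-tilting} by your argument (a $1$-cotilting class is a torsion-free class, hence closed under submodules, then Theorem \ref{teo. big}); your extra implication \eqref{1-tilt} $\Rightarrow$ \eqref{R-is-n-her-tilting} is correct but redundant once the cycle closes. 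The genuine difference is the remaining edge: the paper disposes of \eqref{1-tilt} $\Rightarrow$ \eqref{1-cotilt} in one line by citing the tilting--cotilting character duality \cite[Theorem 8.1.2]{Gobel}, whereas your direct construction of $C=T^+$ from a right tilting module is, in effect, a re-derivation of that cited theorem by hand. What your route buys is self-containedness and an explicit handling of sides --- including the observation, nowhere stated in the paper, that left $n$-hereditary implies right $n$-hereditary for $n>1$ (the paper's appeal to \cite{Gobel} silently crosses sides too, since the character dual of a left tilting module is a right module); what it costs is the ``delicate bookkeeping'' you flag at the end.

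That flagged step is not actually delicate, and in particular no finite-type or direct-limit argument about $T$ is needed: the paper's Proposition \ref{FPnflat-iff-FPninj^+} already does the work. From $T^{\perp_1}=\kker{\Ext^1_R(\FP{n},-)}$ (this is what Proposition \ref{AHT} gives) and the isomorphism $\Tor_1^R(T,X)^+\cong\Ext^1_R(T,X^+)$, one gets for every left module $X$ that $\Tor_1^R(T,X)=0$ iff $X^+\in T^{\perp_1}$, iff $X^+\in\FPinj{n}$, iff $X\in\FPflat{n}$ by Proposition \ref{FPnflat-iff-FPninj^+}(1); combined with $\Ext^1_R(X,T^+)\cong\Tor_1^R(T,X)^+$ and faithfulness of $(-)^+$, this identifies $\kker{\Ext^1_R(-,C)}$ with $\FPflat{n}$ purely formally. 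The cotilting sequence is likewise immediate: dualizing $\ses{R}{T_0}{T_1}$ gives $\ses{T_1^+}{T_0^+}{R^+}$ with $R^+$ an injective cogenerator and $(T^{(\alpha)})^+\cong(T^+)^{\alpha}$, so summand and product memberships survive automatically. So your proposal does close, but everything after your left--right symmetry lemma is precisely the proof of \cite[Theorem 8.1.2]{Gobel}; invoking that result at \eqref{1-tilt}, as the paper does, would let you delete the entire second half of your argument, symmetry detour included.
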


\begin{proof}
\eqref{R-is-n-her-tilting} $\implies$ \eqref{1-tilt}.
Since $n>1$ and $R$ is $n$-hereditary, then by Corollary  \ref{n-her-implies-n-coh} we have that $R$ is $n$-coherent.  Now, from Proposition \ref{AHT} and Theorem \ref{characterization-of-n-coherent} we get that 
\[
\kker{\Ext^1_R(\FP{\infty},-)} = \FPinj{\infty} = \FPinj{n}
\]
 is a $1$-tilting class.

\eqref{1-tilt} $\implies$ \eqref{1-cotilt}. 
We use that $\FPinj{n}$ is a $1$-tiltitng class and apply \cite[Theorem 8.1.2]{Gobel} to get that $\FPflat{n}$ is a $1$-cotilting class.

\eqref{1-cotilt} $\implies$ \eqref{R-is-n-her-tilting}.
If $\FPflat{n}$ is a $1$-cotilting class, then it is a torsion-free class, and so it is closed under submodules. Hence by Theorem \ref{teo. big}, the ring $R$ is $n$-hereditary.
\end{proof}

We note that for the ring from Example \ref{2-her-ring} the torsion pairs from Theorem \ref{FPn-inj-and-FPn-flat-are-torsion-pairs} are not trivial. This follows from the following result.

\begin{proposition}
Let $R = \Z \oplus \bigoplus_{i \geq 1} \Z / 2\Z$ as described in Example \ref{2-her-ring}. Then we have that $\FPinj{2} \subsetneq \Rmod$ and  $\FPflat{2} \subsetneq \Rmod$.
\end{proposition}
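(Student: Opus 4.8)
The plan is to reduce both inequalities to the single task of producing one finitely $2$-presented module that is not projective. Indeed, $\FPinj{2} = \Rmod$ would force $\Ext^1_R(F,-) \equiv 0$, hence $F$ projective, for every $F \in \FP{2}$; dually, $\FPflat{2} = \Rmod$ would force every such $F$ to be flat. Since each $F \in \FP{2} \subseteq \FP{1}$ is finitely presented and finitely presented flat modules are projective, these two eventualities coincide. So it suffices to exhibit a non-projective $F \in \FP{2}$ and then read off explicit modules detecting the non-vanishing of $\Ext^1_R(F,-)$ and $\Tor^R_1(F,-)$.

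The module I would use is $F := R/R(3,0) \cong \Z/3\Z$, where $R$ acts through the projection $R \twoheadrightarrow \Z \twoheadrightarrow \Z/3\Z$. The crucial computation is that $\operatorname{Ann}_R(3,0) = 0$: from the product formula one gets $(n,b)\cdot(3,0) = (3n, b)$, which vanishes only when $n = 0$ and $b = 0$. Hence multiplication by $(3,0)$ is a monomorphism $R \hookrightarrow R$ with image $R(3,0) = 3\Z \oplus \bigoplus_{i \geq 1}\Z/2\Z$, giving a finite free resolution
\[
0 \to R \xrightarrow{\,\cdot(3,0)\,} R \to \Z/3\Z \to 0 .
\]
This places $F$ in $\FP{\infty} \subseteq \FP{2}$. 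To see $F$ is not projective, I would observe that the sequence cannot split: a retraction would be multiplication by some $(s,t)$ with $(s,t)(3,0) = (3s,t) = (1,0)$, forcing $3s = 1$ in $\Z$, which is impossible.

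With the resolution at hand, the two witnesses come out by direct computation. Applying $\Hom_R(-,R)$ yields $\Ext^1_R(F,R) \cong \coker{R \xrightarrow{\,\cdot(3,0)\,} R} = R/R(3,0) \cong \Z/3\Z \neq 0$, so $R \notin \FPinj{2}$ and therefore $\FPinj{2} \subsetneq \Rmod$. Applying $- \otimes_R F$ yields $\Tor^R_1(F,F) \cong \kker{F \xrightarrow{\,\cdot(3,0)\,} F}$; since $(3,0)$ acts on $\Z/3\Z$ as multiplication by $3 \equiv 0$, this kernel is all of $\Z/3\Z \neq 0$, so $F \notin \FPflat{2}$ and therefore $\FPflat{2} \subsetneq \Rmod$.

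The one genuinely subtle point, and the step I expect to be the real obstacle, is the choice of the element $(3,0)$. The natural first attempts, cyclic quotients $R/Rr$ with $r$ supported in $\bigoplus_{i\geq1}\Z/2\Z$ or with even integer part, all fail: such $r$ have non-finitely-generated annihilator, because the infinitely many orthogonal idempotents $(0,\epsilon_i)$ swamp $\operatorname{Ann}_R(r)$, so $R/Rr$ is finitely presented but not finitely $2$-presented, which is exactly the phenomenon behind $R$ being $2$-coherent yet not $1$-coherent. Taking the integer coefficient odd is precisely what annihilates this obstruction and produces a bona fide finitely $2$-presented non-projective module; once $(3,0)$ is in hand, verifying $\operatorname{Ann}_R(3,0) = 0$ and the non-splitting is routine.
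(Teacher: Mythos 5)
Your proof is correct. The computation $(n,b)\cdot(3,0)=(3n,b)$ does vanish only for $(n,b)=(0,0)$, so $0 \to R \xrightarrow{\,\cdot(3,0)\,} R \to \Z/3\Z \to 0$ is a genuine finite free resolution placing $F=\Z/3\Z$ in $\FP{\infty} \subseteq \FP{2}$, the non-splitting argument is sound (a retraction is multiplication by some $(s,t)$ with $(3s,t)=(1,0)$, impossible), and the two explicit witnesses $\Ext^1_R(F,R)\cong R/R(3,0)\cong\Z/3\Z\neq 0$ and $\Tor_1^R(F,F)\cong\kker{F \xrightarrow{\,\cdot(3,0)\,} F}=F\neq 0$ correctly give $R\notin\FPinj{2}$ and $F\notin\FPflat{2}$. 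The paper exploits the same underlying phenomenon but packages it differently: it takes an odd $m\neq 1$, forms the non-split extension $\ses{(m^2,a)R}{(m,a)R}{C}$, invokes Lemma \ref{(m,a)-is-proj} to see both ideals are finitely generated projective (hence $C\in\FP{\infty}$, and $\FP{\infty}=\FP{2}$ by $2$-coherence, though the inclusion $\FP{\infty}\subseteq\FP{2}$ would already suffice), concludes $(m^2,a)R\notin\FPinj{2}$ from $\Ext^1_R(C,(m^2,a)R)\neq 0$, and settles the flat half by appealing to the duality of Proposition \ref{FPnflat-iff-FPninj^+} rather than by a Tor computation. Note that for $a=0$ and $m=3$ the paper's cokernel $C$ is precisely your $F\cong\Z/3\Z$, so the witness module is morally identical; what your version buys is self-containedness (no appendix lemma on projectivity of the ideals $(m,a)R$, no character-module duality), a free rather than merely projective resolution, an explicit Tor witness, and the slightly sharper observation that $R$ itself fails to be FP$_2$-injective, whereas the paper's version buys a whole family of counterexamples $(m^2,a)R$ and, notably, shows that even finitely generated \emph{projective} modules can fail FP$_2$-injectivity. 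Your closing diagnosis of why even integer parts fail is also exactly the paper's Lemma \ref{not-in-FP1}, which shows $(2m,a)R\in\FP{0}\setminus\FP{1}$.
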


\begin{proof}
Consider an odd integer $m \neq 1$ and the ideal $(m,a)R$. Since $(m^2,a) = (m,a)(m,a)$, then we have the following short exact sequence:
\[
\ses{(m^2,a)R}{(m,a)R}{C},
\]
where $C$ is the cokernel of the inclusion map $i: {(m^2,a)R} \to {(m,a)R} $.  We note that this short exact sequence doesn't split. In fact, if there is a map $q: (m,a)R \to (m^2,a)R$ such that $qi = id$, then we  have that 
\[
(m^2,a) = q ((m,a)(m,a)) = (m,a)q((m,a)) = (m^3n,b), 
\]
with $n$ some integer; this can't be. 

From Lemma \ref{(m,a)-is-proj} in the Appendix \ref{proof-of-the-ring}, we see that $(m,a)R$ and $(m^2,a)R$ are finitely generated projective modules, and so they are in $\FP{\infty}$, thus making $C \in \FP{\infty}$. Since $R$ is $2$-hereditary, it is also $2$-coherent and so $\FP{\infty} = \FP{2}$. Thus we have that $C \in \FP{2}$ and that $\Ext^1_R(C,(m^2,a)R) \neq 0$. Hence the $R$-module $(m^2,a)R \not \in \FPinj{2}$, giving us the first statement.

The duality between $\FPinj{n}$ and $\FPflat{n}$, gives the last statement.
\end{proof}

Furthermore if the ring is commutative, then the work of Hrbek \cite{Hrbek} allows us to say a few more results about the torsion pair associated to $\FPflat{n}$.

\begin{corollary} \label{n-her-and-comm}
Let $R$ be an $n$-hereditary and commutative ring with $n>1$.   Then we have that the torsion pair $(^{\perp}\FPflat{n},\FPflat{n})$ is an hereditary $1$-cotilting torsion pair.

\end{corollary}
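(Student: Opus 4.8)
The plan is to first certify that $({}^{\perp}\FPflat{n},\FPflat{n})$ really is a $1$-cotilting torsion pair, and then to promote it to a hereditary one using Hrbek's structure theory for commutative rings.

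For the first step, note that by the theorem preceding this corollary, since $R$ is $n$-hereditary with $n>1$, the class $\FPflat{n}$ is a $1$-cotilting class; choose a $1$-cotilting module $C$ with $\FPflat{n}=\kker{\Ext^1_R(-,C)}$. Its associated $1$-cotilting torsion pair is $(\kker{\Hom_R(-,C)},\kker{\Ext^1_R(-,C)})$, whose torsion-free class is precisely $\FPflat{n}$. By Theorem \ref{FPn-inj-and-FPn-flat-are-torsion-pairs} the pair $({}^{\perp}\FPflat{n},\FPflat{n})$ is itself a torsion pair, and in any torsion pair the torsion class is the left $\Hom$-orthogonal of the torsion-free class; hence $\kker{\Hom_R(-,C)}={}^{\perp}\FPflat{n}$ and the two torsion pairs coincide. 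Thus $({}^{\perp}\FPflat{n},\FPflat{n})$ is the $1$-cotilting torsion pair associated to $C$.

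For heredity I would invoke the standard reformulation that a torsion pair is hereditary exactly when its torsion-free class is closed under injective envelopes (equivalently, its torsion class is closed under subobjects), so it suffices to show $\FPflat{n}$ is closed under injective envelopes. This is where commutativity enters via \cite{Hrbek}: over a commutative ring the $1$-tilting class $\FPinj{n}=\FPinj{\infty}$ corresponds to a faithful, finite-type Gabriel topology $\mathcal{G}$, which here is the topology generated by the finitely generated ideals $I\in\FP{n-1}$ (so that $R/I\in\FP{n}$, and $I$ is projective by Corollary \ref{charac-of-n-her-by-ideals}). Hrbek's classification of $1$-cotilting classes identifies the dual cotilting class $\FPflat{n}$ as the torsion-free class of the \emph{hereditary} torsion pair $(\T_{\mathcal{G}},\F_{\mathcal{G}})$ cut out by $\mathcal{G}$, namely the $\mathcal{G}$-torsion-free modules. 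Since a torsion pair is determined by its torsion-free class, $({}^{\perp}\FPflat{n},\FPflat{n})=(\T_{\mathcal{G}},\F_{\mathcal{G}})$, and the torsion class $\T_{\mathcal{G}}=\{M:\mathrm{ann}_R(x)\in\mathcal{G}\text{ for all }x\in M\}$ is visibly closed under submodules; hence the pair is hereditary.

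The main obstacle is the last identification: matching the $\Tor$-orthogonal description $\FPflat{n}=\{M:\Tor_1^R(F,M)=0\ \forall F\in\FP{n}\}$ with the $\Hom$-orthogonal ($\mathcal{G}$-torsion-free) description supplied by Hrbek, and verifying that the ideals $I\in\FP{n-1}$ really do generate a faithful Gabriel topology of finite type to which Hrbek's theorem applies. The finite-type condition is immediate because these ideals are finitely generated, and faithfulness should follow from the existence of the associated (faithful) tilting and cotilting modules; once $\mathcal{G}$ is in hand, heredity is formal from the torsion-pair/cotilting dictionary.
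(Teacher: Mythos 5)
Your first paragraph is correct, and it is essentially the paper's (implicit) reduction: the tilting/cotilting theorem preceding the corollary gives that $\FPflat{n}$ is a $1$-cotilting class, and since a torsion pair is determined by its torsion-free class, $({}^{\perp}\FPflat{n},\FPflat{n})$ coincides with the $1$-cotilting torsion pair $(\kker{\Hom_R(-,C)},\kker{\Ext^1_R(-,C)})$. The problem is the heredity step. The paper's entire proof is a single citation: \cite[Proposition 3.11]{Hrbek}, which applies to any $1$-cotilting class over a commutative ring and directly yields that the associated torsion pair is hereditary. You instead route through Hrbek's classification of tilting classes by faithful finite-type Gabriel topologies, and there your argument is incomplete by your own admission: you assert but do not prove (i) that the Gabriel topology $\mathcal{G}$ attached to the tilting class $\FPinj{n}$ is the one generated by the ideals $I \in \FP{n-1}$, (ii) that this topology is faithful (``should follow from the existence of the associated tilting and cotilting modules'' is not an argument --- faithfulness of a Gabriel topology is a condition on $R$ being $\mathcal{G}$-torsion-free, not a formal consequence of having a cotilting module in hand), and (iii) the identification of the $\Tor$-orthogonal class $\FPflat{n}$ with the class of $\mathcal{G}$-torsion-free modules. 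Point (iii) is the entire content of the heredity claim in your setup: once you have it, closure of the torsion class under submodules is indeed ``visible,'' but without it nothing is proved. There is also an unstated hypothesis lurking in your appeal to the Gabriel-topology description of the cotilting side, namely that $\FPflat{n}$ is of cofinite type; this is true here (it is the character-dual class of the $1$-tilting class $\FPinj{n}$, and Hrbek moreover shows all $1$-cotilting classes over commutative rings are of cofinite type), but it needs to be said, since the torsion-free description you quote is proved for cotilting classes of that form.

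The repair is simple and is exactly what the paper does: once $\FPflat{n}$ is known to be a $1$-cotilting class, cite \cite[Proposition 3.11]{Hrbek} to conclude the torsion pair is hereditary, with no need to exhibit the Gabriel topology at all. (Closure under injective envelopes, which you use as an intermediate reformulation, is then recovered from heredity via \cite[Proposition VI.3.2]{rings-of-quot}, as the paper notes in the remark following the corollary.) Alternatively, if you want the explicit Gabriel-topology picture, you must actually carry out (i)--(iii) above; as written, your proposal names the obstacle rather than overcoming it.
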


\begin{proof}
This follows immediately from \cite[Proposition 3.11]{Hrbek}.
\end{proof}

\begin{remark}
As a consequence of this last result we have a far from obvious statement about the class $\FPflat{n}$. Namely, that for $n>1$ and over an $n$-hereditary ring the class $\FPflat{n}$ is closed under injective envelopes (see \cite[Proposition VI.3.2]{rings-of-quot})
\end{remark}

Corollary \ref{n-her-and-comm} also allows us to show that the torsion pair associated to $\FPflat{n}$ is a tCG torsion pair; for the definition of tCG-torsion pairs see \cite{BraPa}.

\begin{corollary}
Let $R$ be an $n$-hereditary and commutative ring with $n>1$.   Then we have that the torsion pair $(^{\perp}\FPflat{n},\FPflat{n})$ is a tCG torsion pair.
\end{corollary}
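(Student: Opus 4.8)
The plan is to obtain the tCG property as a formal consequence of the structural result in Corollary~\ref{n-her-and-comm}, by invoking the criterion for tCG torsion pairs recorded in \cite{BraPa}. Recall that, for the pair $({}^{\perp}\FPflat{n},\FPflat{n})$, being tCG means that the associated Happel--Reiten--Smal{\o} t-structure it induces on the derived category $D(R)$ is compactly generated. Thus the whole argument consists in verifying that the hypotheses of the relevant theorem of \cite{BraPa} are exactly the properties we have already established for this torsion pair, so that no further analysis of the modules in $\FPflat{n}$ is needed.

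First I would collect the two ingredients that feed the criterion. From Corollary~\ref{n-her-and-comm} the torsion pair is hereditary, that is, its torsion class ${}^{\perp}\FPflat{n}$ is closed under subobjects; and from Proposition~\ref{closure-properties} the torsion-free class $\FPflat{n}$ is closed under direct limits (as well as under products and extensions), so the torsion pair is of finite type. These are precisely the conditions that, via \cite{BraPa}, drive compact generation: the finite-type hypothesis makes the heart of the t-structure a Grothendieck category, while the hereditary hypothesis supplies a generating set of finitely generated (compact) objects for the associated aisle. The additional $1$-cotilting structure from Corollary~\ref{n-her-and-comm} is available as well, should the cited criterion be phrased in those terms rather than in terms of ``hereditary of finite type''.

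Once these facts are in place, I would simply apply the tCG-criterion of \cite{BraPa} to $({}^{\perp}\FPflat{n},\FPflat{n})$ and conclude. The step I expect to require the most care is not in our concrete setting but in the matching of hypotheses: one must confirm that ``hereditary together with of finite type'' (equivalently here, ``hereditary $1$-cotilting'') is genuinely what \cite{BraPa} demands in order to produce \emph{compact} generators in $D(R)$, and not merely a generating set living inside $\Rmod$. This is the only delicate point, since the passage from closure properties in $\Rmod$ to compact generation in the derived category is where the hereditary hypothesis is essential. After that identification the result is immediate, all the required closure properties of $\FPflat{n}$ being furnished by Proposition~\ref{closure-properties} and Corollary~\ref{n-her-and-comm}.
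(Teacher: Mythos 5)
Your proposal is correct and follows essentially the same route as the paper: the paper likewise combines the hereditariness of $({}^{\perp}\FPflat{n},\FPflat{n})$ from Corollary \ref{n-her-and-comm} with the closure of $\FPflat{n}$ under direct limits (Proposition \ref{closure-properties}) and then applies the criterion of \cite[Corollary 3.8]{BraPa}. The only difference is presentational: your extra discussion of compact generation and the heart is background to the cited criterion rather than part of the argument itself.
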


\begin{proof}
From Corollary \ref{n-her-and-comm} we have that the torsion pair is hereditary. Since the class of  $\FPflat{n}$ is closed under direct limits, then from  \cite[Corollary 3.8]{BraPa} we have the result. 
\end{proof}



\appendix
\section{Modules over the ring $\Z \oplus \bigoplus_{i \geq 1}\Z / 2\Z$} \label{proof-of-the-ring}

In this section we show some properties about the ring $R=\Z \oplus \bigoplus_{i \geq 1}\Z / 2\Z$, from Example \ref{2-her-ring}. In particular we show that it is not a semi-hereditary (1-hereditary), 2-coherent ring, that is also B\'ezout and therefore an arithmetical ring. We also include a property about the projectivity of some of its principal ideals which are of importance for the torsion pairs of the previous section. 

For notational purposes we let $A =\bigoplus_{i \geq 1}\Z / 2\Z$, thus $R = \Z \oplus A$. Recall that  addition is defined component wise,  and for $m,n \in \Z$, and for $a,b \in A$ multiplication is given by 
\[
(m,a) \cdot (n,b) = (mn, mb + na + ab),
\]
where  $ma = (m a_1,m a_2,m a_3, \ldots)$, and  $ab = (a_1b_1,a_2,b_2,a_3,b_3\ldots)$. Given $(m,a) \in R$, we define the support of $a$ as 
\[
\supp{a} = \{ i \in \Z : a_i \neq 0 \}.
\]
We begin by observing that this ring is the standard unitification of $A$, when viewed as a ring without a unit (see \cite[\S 1.1 Exercise 1]{Anderson-Fuller}). Next, we have the following technical lemma.

\begin{lemma} \label{not-in-FP1}
For any $a \in A$, the ideal $I=(2m,a)R \in  \FP{0} \setminus \FP{1}$.
\end{lemma}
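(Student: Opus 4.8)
The plan is to dispatch the easy containment $I \in \FP{0}$ in one line---$I = (2m,a)R$ is a principal, hence cyclic, hence finitely generated ideal---and to put all the effort into proving $I \notin \FP{1}$. Since $R$ is commutative, the surjection $\phi\colon R \to I$, $r \mapsto (2m,a)\cdot r$, identifies $I$ with the cyclic module $R/\operatorname{Ann}(2m,a)$, where $\operatorname{Ann}(2m,a) = \{ r \in R : (2m,a)\,r = 0 \}$. I would then invoke the standard fact (a consequence of Schanuel's lemma) that a cyclic module $R/J$ is finitely presented if and only if $J$ is finitely generated. Thus the whole lemma reduces to showing that the annihilator ideal $\operatorname{Ann}(2m,a)$ is \emph{not} finitely generated.

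Next I would compute this annihilator explicitly, exploiting the defining feature of $R$ that $A = \bigoplus_{i \geq 1}\Z/2\Z$ has exponent $2$, so that $2m \cdot b = 0$ for every $b \in A$. For $(n,b) \in R$ this gives $(2m,a)(n,b) = (2mn,\; na + ab)$, with the second coordinate computed componentwise. Its vanishing amounts to the two equations $2mn = 0$ and $na + ab = 0$. Assuming $m \neq 0$ (as the notation $2m$ intends), the first forces $n = 0$, and the second then collapses to $ab = 0$, i.e.\ $b_i = 0$ for every $i \in \supp{a}$. Hence
\[
\operatorname{Ann}(2m,a) = \{ (0,b) : \supp{b} \cap \supp{a} = \emptyset \}.
\]

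Finally I would show this ideal is not finitely generated by a support-counting argument. Because $\supp{a}$ is finite, there are infinitely many indices $j \notin \supp{a}$, and for each such $j$ one checks $(0,e_j) \in \operatorname{Ann}(2m,a)$ (here $2m \cdot e_j = 0$ and $a e_j = a_j e_j = 0$). The key observation is that the $R$-action on second coordinates is \emph{diagonal}: the $i$-th component of $(n,c)(0,b)$ equals $b_i\big((n \bmod 2)+c_i\big)$, so multiplication can never enlarge the support of a vector. Consequently the submodule generated by any finite family $(0,b^{(1)}),\dots,(0,b^{(t)})$ consists only of elements supported in the finite set $S = \bigcup_k \supp{b^{(k)}}$, and therefore omits $(0,e_j)$ for every $j \notin \supp{a} \cup S$. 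This contradiction shows $\operatorname{Ann}(2m,a)$ is not finitely generated, whence $I \notin \FP{1}$.

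The step that requires the most care---rather than being a deep obstacle---is this last passage to ``not finitely generated'': one must recognize that as an $R$-module the tail $0 \oplus A$ behaves like a direct sum of infinitely many simple modules acted on diagonally, and that the exponent-$2$ hypothesis is exactly what kills the integer contribution ($2m \cdot b = 0$) and pins the annihilator down to this infinitely generated tail. I would also verify the degenerate case $a = 0$ (where $\supp{a} = \emptyset$ and the annihilator is all of $0 \oplus A$) to confirm that the statement holds uniformly for every $a \in A$.
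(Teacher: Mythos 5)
Your proof is correct and takes essentially the same route as the paper's: the paper likewise presents $I$ via the surjection $R \to (2m,a)R$ sending $(1,0) \mapsto (2m,a)$, computes its kernel (your annihilator) to be $0 \oplus \left(\Z/2\Z\right)^{(\N \setminus \supp{a})}$, and concludes by the same support-counting argument that a finitely generated submodule of $0 \oplus A$ is supported in a finite set. Your explicit appeal to Schanuel's lemma, and your caveat that $m \neq 0$ (which the paper's kernel computation also uses silently, and which is genuinely needed, since for $m=0$ the kernel is the principal ideal $(1,\mathbf{1}_{\supp{a}})R$ and the lemma would fail), merely make explicit steps the paper leaves implicit.
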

\begin{proof}
Clearly $I$ is finitely generated, hence $I \in \FP{0}$. Now consider the map $R \xrightarrow{f} (2m,a)R$ given by $f((1,0)) = (2m,a)$. A quick computation shows that:
\[
\kker{f} = 0 \oplus \left( \Z / 2\Z \right)^{(\N \setminus \supp{a})},
\]
where  $\left( \Z / 2\Z \right)^{(\N \setminus \supp{a})}$ is the direct sum of $\Z / 2\Z$ in the positions given by $\N \setminus \supp{a}$ and $0$ otherwise. Note that $(0,b)R = 0 \oplus \left( \Z / 2\Z \right)^{(\supp{b})}$. Let us suppose that $\kker{f}$ is finitely generated, then we get that:
\[
 \kker{f} = (0,a_1)R + \cdots + (0,a_n)R  \subseteq 0 \oplus \left( \Z / 2\Z \right)^{(\cup \; \supp{a_i})}.
\]
This is a contradiction, since $\kker{f}=0 \oplus \left( \Z / 2\Z \right)^{(\N \setminus \supp{a})}$. Hence $\kker{f}$ is not finitely generated.
\end{proof}

\begin{proposition} \label{not-1-her}
$R$ is not 1-hereditary. 
\end{proposition}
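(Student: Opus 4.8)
The plan is to argue by contradiction and reduce everything to the already-established Lemma \ref{not-in-FP1}, which provides the crucial finitely generated ideal that fails to be finitely presented. Specifically, I would fix any nonzero $a \in A$ and an integer $m$, and consider the finitely generated ideal $I = (2m,a)R$. By Lemma \ref{not-in-FP1} this ideal lies in $\FP{0} \setminus \FP{1}$; in particular it is finitely generated but not finitely presented.

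Next I would invoke the characterization of $1$-hereditary (= semi-hereditary) rings. Recall that $R$ is $1$-hereditary precisely when every finitely generated ideal is projective, which is exactly the content of Corollary \ref{charac-of-n-her-by-ideals} in the case $n=1$ (every ideal in $\FP{0}$ is projective). So if $R$ were $1$-hereditary, then the finitely generated ideal $I = (2m,a)R$ would have to be projective. The contradiction then comes from the standard fact that a finitely generated projective module is automatically finitely presented: writing $I$ as a direct summand of some $R^k$ exhibits both $I$ and its complementary syzygy as finitely generated, so $I \in \FP{1}$ (indeed $I \in \FP{\infty}$). This directly contradicts $I \notin \FP{1}$ from Lemma \ref{not-in-FP1}, forcing the conclusion that $R$ is not $1$-hereditary.

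The genuinely hard part of this argument has in fact already been isolated and dispatched in Lemma \ref{not-in-FP1}: namely, the computation that $\kker{f}$ for the natural surjection $R \xrightarrow{f} (2m,a)R$ equals $0 \oplus (\Z/2\Z)^{(\N \setminus \supp{a})}$, an infinitely generated module, so that $I$ cannot be finitely presented. Once that lemma is in hand, the present proposition is a short formal consequence, and I do not expect any further obstacle. I would simply take care to state the contradiction cleanly, emphasizing that the failure of $1$-heredity here is caused by the presence of a finitely generated ideal whose first syzygy is not finitely generated, which is the precise obstruction distinguishing $\FP{0}$ from $\FP{1}$ and hence preventing semi-heredity.
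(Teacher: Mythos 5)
Your proof is correct, and it leans on exactly the same key input as the paper: Lemma \ref{not-in-FP1}, which exhibits a finitely generated ideal $(2m,a)R$ that is not finitely presented. The only divergence is in the final deduction. The paper concludes in one line via Corollary \ref{n-her-implies-n-coh}: the ideal witnesses that $R$ is not $1$-coherent, and since $n$-hereditary implies $n$-coherent for $n \geq 1$, $R$ cannot be $1$-hereditary. You instead argue directly from the definition: $1$-heredity would force the finitely generated ideal to be projective, and a finitely generated projective module is automatically finitely presented (split it off $R^k$ and observe the complement is finitely generated), contradicting the lemma. This effectively inlines, in the special case at hand, the same mechanism used in the paper's proof of Corollary \ref{n-her-implies-n-coh} (projective syzygy implies finitely presented), so the two arguments are mathematically the same at heart; yours is marginally more self-contained, the paper's is shorter given its earlier corollary. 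One small caveat: you cite Corollary \ref{charac-of-n-her-by-ideals} as saying $R$ is $1$-hereditary \emph{precisely when} every finitely generated ideal is projective, but that corollary only states the forward implication; since the forward direction is all you use (and it also follows immediately from Definition \ref{def-n-her}, an ideal being a submodule of $R$), this does not affect the validity of your proof.
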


\begin{proof}
Lemma \ref{not-in-FP1} shows a family of ideals $I \in  \FP{0} \setminus \FP{1}$; that is finitely generated ideals, that are not finitely presented. Hence $R$ is not coherent, or 1-coherent, and the result follows from from Corollary \ref{n-her-implies-n-coh}.
\end{proof}

Regarding the observation about the non-coherency of this ring we say the following.
\begin{proposition} \label{2-coherent-Bezout}
$R$ is a 2-coherent ring.
\end{proposition}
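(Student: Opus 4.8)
The plan is to reduce the statement to results already established in the paper rather than to analyze modules over $R$ by hand. The crucial observation is that $R$ has already been shown to be $2$-hereditary in Example \ref{2-her-ring}, and that $n$-hereditary rings are $n$-coherent for $n \geq 1$ by Corollary \ref{n-her-implies-n-coh}. Specializing the latter to $n=2$ would yield the proposition at once. Before doing so I would double-check that there is no circularity: the $2$-hereditary property of $R$ established in Example \ref{2-her-ring} rests only on Vasconcelos's computation that $\text{gl.wd}(R)\leq 1$ together with the equivalence \eqref{weak-dim-characterization} $\Leftrightarrow$ \eqref{dim-characterization-of-n-her} of Lemma \ref{proj-dim-FPn-less-than-1}, and in particular it never invokes $2$-coherence.

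If a self-contained argument is preferred, I would simply unwind the proof of Corollary \ref{n-her-implies-n-coh} in this special case. Take any $M \in \FP{2}$ and choose a short exact sequence $\ses{\Omega M}{F}{M}$ with $F$ finitely generated free, so that $\Omega M \in \FP{1}$. Since $\text{gl.wd}(R)\leq 1$ we have $\wdim{M}\leq 1$, whence $\Omega M$ is flat. A finitely presented flat module is projective by \cite[Proposition 3.2.12]{E-J}, so $\Omega M$ is finitely generated projective and therefore $M \in \FP{\infty}$. Thus $\FP{2}\subseteq \FP{\infty}$, and since the reverse inclusion always holds by the chain \eqref{chain-of-FPn} we get $\FP{2}=\FP{\infty}$. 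By the equivalence \eqref{R-n-coh} $\Leftrightarrow$ \eqref{FPn=FPinfty} of Theorem \ref{characterization-of-n-coherent}, $R$ is $2$-coherent; equivalently, by Definition \ref{def-n-coherent}, $\FP{2}\subseteq \FP{3}$.

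There is essentially no genuine obstacle here: the real work was carried out in establishing $\text{gl.wd}(R)\leq 1$ and in Lemma \ref{proj-dim-FPn-less-than-1} and Corollary \ref{n-her-implies-n-coh}. The only point deserving care is the bookkeeping just mentioned, namely confirming that the $2$-hereditary status of $R$ is proved independently of its $2$-coherence, so that citing Corollary \ref{n-her-implies-n-coh} is legitimate and the logical structure of the appendix remains acyclic.
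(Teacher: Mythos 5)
Your proof is correct and matches the paper's own argument exactly: the paper proves Proposition~\ref{2-coherent-Bezout} by citing Example~\ref{2-her-ring} together with Corollary~\ref{n-her-implies-n-coh}, just as you do. Your circularity check and the unwound self-contained version are sound but add nothing beyond the paper's one-line proof.
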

\begin{proof}
This  follows directly from Example \ref{2-her-ring} and Corollary \ref{n-her-implies-n-coh}.
\end{proof}

From {\cite[Example 1.3(b)]{Vasconcelos}} we know that the localizations of $R$ at its prime ideals are valuation domains. Thus from \cite[Section 6.4]{Faith} we have that $R$  is an \emph{arithmetical} ring (that is, a ring such that the localization at all of  its maximal ideal are valuation rings). 

However, we can say more. Indeed, we next show that  $R$ is a \emph{B\'ezout} ring, that is a ring such that  all its finitely generated ideals are principal \cite[Section 5.4B]{Faith}. 

\begin{proposition}
$R$ is a B\'ezout ring.
\end{proposition}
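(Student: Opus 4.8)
The plan is to exploit the quotient map $\pi\colon R\to\Z$, $(m,a)\mapsto m$, whose kernel is $A=\bigoplus_{i\geq 1}\Z/2\Z$, together with the fact that $R$ is commutative (both $mb+na$ and the componentwise product $ab$ are symmetric in the two factors). Since a commutative ring is B\'ezout as soon as every two-generated ideal is principal, a routine induction on the number of generators reduces the statement to showing that $I:=xR+yR$ is principal for arbitrary $x=(m,a)$ and $y=(n,b)$. Setting $d=\gcd(m,n)$ and picking integers $s,t$ with $sm+tn=d$, the element $z=(s,0)x+(t,0)y=(d,c_0)$ lies in $I$ and satisfies $\pi(z)=d$, where $c_0$ is supported in $\supp{a}\cup\supp{b}$. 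A one-line diagram chase with $\pi$ (if $\pi(w)=dk$ then $w-(k,0)z\in A$) then yields the decomposition
\[
I \;=\; zR + (I\cap A),
\]
so the whole problem is transferred to understanding the $R$-ideal $I\cap A\subseteq A$.

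Next I would record two structural facts. First, every $R$-ideal $J\subseteq A$ is a \emph{support ideal}: since each element of $A$ is idempotent and has finite support, multiplying by the one-coordinate idempotents and taking finite sums shows $J=\{(0,c):\supp{c}\subseteq T\}$ for $T=\bigcup_{(0,c)\in J}\supp{c}$. Second, I would compute $zR$ explicitly, using that $m\cdot c$ in $A$ depends only on $m\bmod 2$. Writing $z=(d,c_0)$ and expanding $z\cdot(k,w)$, one finds two regimes according to the parity of $d$: when $d$ is odd, $z\cdot(k,w)$ has first coordinate $dk$ and $A$-part equal to $(k\bmod 2)$ on $\supp{c_0}$ and free off $\supp{c_0}$, so $zR\cap A$ is the (cofinite) support ideal on $\N\setminus\supp{c_0}$; when $d$ is even (including $d=0$), $z\cdot(k,w)$ has $A$-part arbitrary on $\supp{c_0}$ and zero elsewhere, so $zR\cap A$ is the support ideal on $\supp{c_0}$.

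Finally, writing $T$ for the support of $I\cap A$, I would propose the generator $z'=(d,c')$ with $\supp{c'}=\supp{c_0}\setminus T$ in the odd case and $\supp{c'}=T$ in the even case, and verify $I=z'R$ by the two inclusions: that $z'\in I$ (since $z'-z\in I\cap A$), giving $z'R\subseteq I$; and that both $z\in z'R$ and $I\cap A\subseteq z'R$, giving $I\subseteq I\cap A+zR\subseteq z'R$. Each of these is a direct check against the explicit descriptions above. The main obstacle, and the reason the parity split is forced, is the finiteness of $\supp{c'}$: an element $c'\in A$ must have finite support, yet $I\cap A$ is genuinely infinite (cofinite) when $d$ is odd. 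The two regimes cooperate precisely here: in the odd case $\supp{c'}\subseteq\supp{c_0}$ is automatically finite, while in the even case both $m$ and $n$ are even, so every product $xp+yq$ has its $A$-component supported inside $\supp{a}\cup\supp{b}$, forcing $T$ to be finite. Once this finiteness is secured the verification is mechanical, and together with the arithmetical/localization facts already recorded this exhibits $R$ as a B\'ezout ring.
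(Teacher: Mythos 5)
Your argument is correct, but it takes a genuinely different route from the paper's. The paper works with all $k$ generators at once and simply \emph{exhibits} a generator in each parity regime: for $d$ even it takes $(d,a)$ with $a$ the indicator of $\bigcup_j \supp{a_j}$ and verifies the identities $(n_i,a_i)=(d,a)(m_i,a_i)$ (for $m_i=n_i/d$ even) and $(n_i,a_i)=(d,a)(m_i,a+a_i)$ (for $m_i$ odd); for $d$ odd it introduces $J=\{a\in A:(d,a)\in I\}$, shows $J$ is closed under the idempotent multiplication of $A$ via $(d,ab)=(d,a)-(d,b)+(d,a)(1,b)$, extracts $c\in J$ of minimal (finite) support so that $ca=c$ for all $a\in J$, and checks $I=\langle (d,c)\rangle$ by analogous identities. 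You instead reduce to two-generated ideals, split $I=zR+(I\cap A)$ along $\pi$, classify the $R$-ideals inside $A$ as support ideals, compute $zR\cap A$ in each parity regime, and then correct the generator to $z'$. Both proofs turn on the same dichotomy, and for the same underlying reason — elements of $A$ have finite support, so a cofinite ``torsion part'' can only be absorbed by an odd first coordinate — but your decomposition makes that constraint visible structurally, and it yields both inclusions by explicit, reusable computations (the paper's even case leaves the containment $(d,a)\in I$ to the reader as ``a suitable combination of the generators''), at the cost of an extra induction to pass from two generators to $k$. One step you should spell out when writing this up: in the even case, $z'\in I$ requires $\supp{c_0}\subseteq T$; this does follow from your own Fact 2, since $zR\cap A$ is the support ideal on $\supp{c_0}$ and $zR\subseteq I$, but it deserves a line rather than being folded into ``mechanical verification.''
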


\begin{proof}
Let us consider the following short exact sequence:
\[
0 \to \bigoplus_{i \geq 1}\Z / 2\Z \to R \xrightarrow{\pi} \Z \to 0
\]
where $\pi(m,a) = m$. Now let $I= \langle (n_1,a_1), \cdots , (n_k,a_k) \rangle$, be a finitely generated ideal of $R$. Since $\pi(I)$ is an ideal of $\Z$, we know is a principal ideal and indeed $\pi(I) = (n_1,\ldots,n_k) = (d)$. We now split the proof in two cases depending if either $d$ is even or odd.

Suppose that $d$ is even. We consider the set $S = \bigcup_{j = 1}^n \supp{a_j}$ and define $a = \sum_{i \in S} e_i$, where $e_i$ is the element of $A$ with zero everywhere except in $i$-th position where it has a 1. Then we claim that $I = \langle (d,a) \rangle$. To see this, let  $m_i := n_i / d$ and check that.
\begin{enumerate}
\item If  $m_i$ is even, then $(n_i,a_i) = (d,a) (m_i,a_i)$.
\item If  $m_i$ is odd, then $(n_i,a_i) = (d,a) (m_i,a + a_i)$.
\end{enumerate}
Hence $I \subset \langle (d,a) \rangle$. For the other containment we just need a suitable combination of the generators giving $(d,a)$, and this is not hard to obtain. This concludes the even case.

Now, suppose that $d$ is odd. We consider the set 
\[
J = \{ a \in A : (d,a) \in I \} 
\]
and note that if $a,b \in J$, then $ab \in J$. Indeed, if $(d,a),(d,b) \in I$, then $(d,ab) = (d,a)-(d,b) + (d,a)(1,b) \in I$. 

Since $\supp{ab} \subseteq \supp{a}$, then there is $c \in J$ such that $\supp{c} \subseteq \supp{a}$, for all $a \in J$. That is $ca = c$ for all $a \in J$. We, now claim that $I = \langle (d,c) \rangle$. From the definition we have that $(d,c) \in I$, which gives the first inclusion. To see the other inclusion, let  $m_i := n_i / d$ and check the following.
 \begin{enumerate}

\item  If  $m_i$ is even, then $(d,a_i + c) = (n_i,a_i) - (d,c) (m_i-1,c) \in I$. This means that $a_i + c \in J$ and thus $c a_i = 0$. Now, 
 \[
 (n_i,a_i) = (n_i,a_i + ca_i) = (d,c)(m_i,a_i)
 \]

 \item If  $m_i$ is odd, then $(d,a_i) = (n_i,a_i) - (d,c) (m_i-1,c) \in I$. This means that $a_i \in J$ and thus $c a_i = c$, that is, $c a_i + c = 0$. Now, 
 \[
 (n_i,a_i) = (n_i,a_i + ca_i+c) = (d,c)(m_i,a_i)
 \]

\end{enumerate}
\end{proof}

We add a result about the projectivity of certain principal ideals.

\begin{lemma} \label{(m,a)-is-proj}
For any $a \in A$ and $m$ any odd integer, the ideal $(m,a)R$ is projective. 
\end{lemma}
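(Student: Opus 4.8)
The plan is to exploit that $R$ is commutative, so that the ideal $(m,a)R$ is the image of the $R$-linear map $r \mapsto (m,a)r$, yielding an isomorphism $(m,a)R \cong R/\mathrm{ann}(m,a)$, where $\mathrm{ann}(m,a) = \{(n,b)\in R : (m,a)(n,b) = 0\}$. It therefore suffices to show that this annihilator is a \emph{direct summand} of $R$: then the defining short exact sequence $\ses{\mathrm{ann}(m,a)}{R}{(m,a)R}$ splits and $(m,a)R$ is projective. I would achieve this by producing an idempotent $e \in R$ with $\mathrm{ann}(m,a) = eR$.

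The first main step is to compute the annihilator explicitly. For $(n,b) \in R$ we have $(m,a)(n,b) = (mn,\, mb + na + ab)$, which vanishes exactly when $mn = 0$ and $mb + na + ab = 0$ in $A$. Since $m$ is odd it is nonzero, so $mn = 0$ forces $n = 0$; substituting, the $i$-th coordinate of the remaining condition becomes $m b_i + a_i b_i = 0$ in $\Z/2\Z$. This is the point where the oddness of $m$ is genuinely used: it gives $m b_i = b_i$, collapsing the condition to $b_i(1 + a_i) = 0$, which holds for any $b_i$ when $a_i = 1$ and forces $b_i = 0$ when $a_i = 0$. Hence
\[
\mathrm{ann}(m,a) = \{\, (0,b) : \supp{b} \subseteq \supp{a} \,\}.
\]

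The second step is to verify that $e := (0,a)$ does the job. Squaring gives $(0,a)^2 = (0,a^2) = (0,a)$, using $a_i^2 = a_i$ in $\Z/2\Z$, so $e$ is idempotent. Computing the principal ideal, $(0,a)(n,b) = (0,\, na + ab)$ has $i$-th coordinate $a_i(n + b_i)$, which is $0$ when $a_i = 0$ and ranges over all of $\Z/2\Z$ (as $n$ varies) when $a_i = 1$; thus $eR = \{(0,c) : \supp{c} \subseteq \supp{a}\}$, matching the annihilator above. Therefore $\mathrm{ann}(m,a) = eR$, so $R = eR \oplus (1-e)R$ with $1 - e = (1,a)$, and $(m,a)R \cong R/eR \cong (1-e)R$ is a direct summand of $R$, hence projective.

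All the ingredients are elementary coordinatewise arithmetic in $A$, so I do not expect a conceptual obstacle; the only step requiring genuine care is the reduction $m b_i = b_i$, which is exactly where the hypothesis that $m$ is odd is indispensable. (For even $m$ the element $(m,a)$ behaves quite differently, as already witnessed by Lemma \ref{not-in-FP1}.) The main thing to keep straight will be bookkeeping the two cases $a_i = 0$ and $a_i = 1$ consistently across the annihilator computation and the identification of $eR$.
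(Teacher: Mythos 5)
Your proof is correct, but it takes a genuinely different (and somewhat more structural) route than the paper's. Both arguments split the same short exact sequence $\ses{\kker{f}}{R}{(m,a)R}$, where $f(r)=(m,a)r$; the paper does so by directly exhibiting a section $g:(m,a)R\to R$, $(m,a)\mapsto(1,a)$, checking well-definedness by the observation that for $m$ odd every zero divisor of $(m,a)$ is a zero divisor of $(1,a)$ --- i.e.\ the containment $\mathrm{ann}(m,a)\subseteq\mathrm{ann}(1,a)$, which is exactly your coordinatewise computation --- and then verifying $f\circ g=\mathrm{id}$ via $(m,a)(1,a)=(m,a)$. You instead compute the kernel exactly, $\kker{f}=\mathrm{ann}(m,a)=\{(0,b):\supp{b}\subseteq\supp{a}\}$, recognize it as the ideal $eR$ generated by the idempotent $e=(0,a)$, and conclude from the decomposition $R=eR\oplus(1-e)R$; note that your $1-e=(1,0)-(0,a)=(1,a)$ (as $-a=a$ in $A$) is precisely the element the paper uses to define its section, so the two proofs are two sides of the same computation. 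Your version costs a bit more bookkeeping but buys more: the exact annihilator, the clean identification $(m,a)R\cong(1,a)R=(1-e)R$ as an explicit summand of $R$, and no ad hoc well-definedness check for $g$. One small point of care in your identification of $eR$: the integer $n$ in $(0,a)(n,b)=(0,na+ab)$ is the same in every coordinate, so rather than arguing coordinatewise that $n+b_i$ ``ranges over all of $\Z/2\Z$,'' it is cleanest to obtain $\{(0,c):\supp{c}\subseteq\supp{a}\}\subseteq eR$ by taking $n=0$ and $b=c$, since then $(0,a)(0,c)=(0,ac)=(0,c)$; the reverse containment is immediate from the coordinate formula $a_i(n+b_i)$, which vanishes off $\supp{a}$.
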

\begin{proof}
Consider the epimorphism $R \xrightarrow{f} (m,a)R$, which sends $(1,0) \mapsto (m,a)$. Now  consider the homomorphism $(m,a)R \xrightarrow{g} R$ given by $(m,a) \mapsto (1,a)$. This is a well defined map since the equation $(m,a)(n,b) = 0$ implies that $(1,a)(n,b)=0$, when $m$ is odd. Hence any zero divisor of $(m,a)$ is also a zero divisor of $(1,a)$. Now, since $(m,a)(1,a) = (m,a)$, we quickly check that the composition $f  g (x)= x$ for any $x \in (m,a)R$, giving us a splitting of $R$.
\end{proof}


\section{A categorical result applied to finitely $n$-presented modules} \label{Grothendieck}

In this appendix we show a result in the general setting of Grothendieck categories, which when applied to  the setting of $R$-modules provides a characterization of finitely $n$-presented modules and therefore of $n$-coherent rings; namely Theorem \ref{n-coherent rings}. Although the proof can be done directly for $R$-modules, we prefer to show it in this generality. We recall some concepts of these type of categories; however, for a general treatment of Grothendieck categories, we refer the readers to \cite{rings-of-quot}.

A \emph{Grothendieck category}, $\mathcal{G}$, is a cocomplete abelian category with a generator and the direct limits are exact. It is well-known that every Grothendieck category has a \emph{injective cogenerator}, that is, an object $I$ that is injective and such that the functor $\Hom_{\mathcal{G}}(-,I)$ is faithful. Given that every Grothendieck category has products for every family of objects, we get that the faithful condition on the functor $\Hom_{\mathcal{G}}(-,I)$  is equivalent to the condition that every object of $\mathcal{G}$ is isomorphic to a subobject of a product of $I$.

Given any Grothendieck category $\mathcal{G}$ and an injective cogenerator $I$ of $\mathcal{G}$ we consider the functor $\Psi:\mathcal{G} \to \mathcal{G}$ given by  
\[
M \mapsto I^{\Hom_{\mathcal{G}}(M,I)} := \prod_{h \in \Hom_{\mathcal{G}}(M,I)}I_h,
\]
with $I_h = I$. 

Indeed, this assignment is functorial since if $f:M \to N$ is a morphism in $\mathcal{G}$, then $\Psi(f):I^{\Hom_{\mathcal{G}}(M,I)} \rightarrow I^{\Hom_{\mathcal{G}}(N,I)}$ is given by the universal property of the product in $\mathcal{G}$. That is, $\Psi(f)$ is the unique morphism in $\mathcal{G}$ such that $\pi^{N}_h \circ \Psi(f)=\pi^{M}_{h \circ f}$, where $\pi^{N}_{h}:I^{\Hom_{\mathcal{G}}(N,I)} \rightarrow I$ is the $h$-projection morphism, where $h\in \Hom_{\mathcal{G}}(N,G)$, and similarly for $\pi^{M}_{h \circ f}$. Also observe that the functor $\Psi$ comes with a natural transformation $\iota: \textrm{id}_{\mathcal{G}} \rightarrow \Psi$ which is monomorphic. 

We will denote by $\varinjlim \im{\Psi}$  the class the objects of $\mathcal{G}$ which are a direct limit of a direct system in $\im{\Psi}$.

\begin{theorem} \label{FP_n-lim-im}
Let $\mathcal{G}$ be a Grothendieck category and let $M$ be an object in $\mathcal{G}$. Consider the functor $\Psi$ described in the previous paragraph and let $n>1$. Then, 
the  functors $\Ext^{i}_{\mathcal{G}}(M,-):\mathcal{G} \to \textrm{Ab}$ preserve direct limits, for each $i=0,\ldots, n-1$, if and only if, the functors $\Ext^{i}_{\mathcal{G}}(M,-):\mathcal{G} \to \textrm{Ab}$ preserve direct limits, for each $i=0,\ldots, n-2$ and $\varinjlim \im{\Psi} \subseteq \kker{\Ext^{n-1}_{\mathcal{G}}(M,-)}$.

\end{theorem}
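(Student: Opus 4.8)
The plan is to prove the two implications separately, the forward one being essentially formal and the reverse one carrying the content. Throughout I write $\theta^{i}_{Z}\colon \varinjlim \Ext^{i}_{\mathcal{G}}(M,Z_{\alpha}) \to \Ext^{i}_{\mathcal{G}}(M,\varinjlim Z_{\alpha})$ for the canonical comparison map attached to a direct system $\{Z_{\alpha}\}$, so that ``$\Ext^{i}_{\mathcal{G}}(M,-)$ preserves direct limits'' means exactly ``$\theta^{i}_{Z}$ is an isomorphism for all $Z$''.

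For the forward direction, assume preservation in degrees $0 \le i \le n-1$. Preservation in degrees $0 \le i \le n-2$ is immediate, so only the inclusion $\varinjlim \im{\Psi} \subseteq \kker{\Ext^{n-1}_{\mathcal{G}}(M,-)}$ needs an argument. Here I would use that every object of $\im{\Psi}$ is a product of copies of the injective cogenerator $I$, hence injective, so that $\Ext^{n-1}_{\mathcal{G}}(M,E)=0$ for each such $E$ because $n-1 \ge 1$ (this is where $n>1$ enters). If $X = \varinjlim X_{\beta}$ with $X_{\beta}\in\im{\Psi}$, then preservation in degree $n-1$ gives $\Ext^{n-1}_{\mathcal{G}}(M,X)\cong \varinjlim \Ext^{n-1}_{\mathcal{G}}(M,X_{\beta})=0$.

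For the reverse direction, assume preservation in degrees $0 \le i \le n-2$ together with $\varinjlim \im{\Psi} \subseteq \kker{\Ext^{n-1}_{\mathcal{G}}(M,-)}$, fix a direct system $\{Y_{\alpha}\}$ with colimit $Y$, and aim to show $\theta^{n-1}_{Y}$ is an isomorphism. The device is the monic natural transformation $\iota\colon \mathrm{id}_{\mathcal{G}} \to \Psi$: applying it levelwise gives a direct system of short exact sequences $\ses{Y_{\alpha}}{\Psi(Y_{\alpha})}{C_{\alpha}}$ with $C_{\alpha}:=\coker{\iota_{Y_{\alpha}}}$ and $\Psi(Y_{\alpha})$ injective. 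Since direct limits are exact in $\mathcal{G}$, passing to $\varinjlim$ yields $\ses{Y}{P}{C}$ with $P=\varinjlim \Psi(Y_{\alpha}) \in \varinjlim\im{\Psi}$ and $C=\varinjlim C_{\alpha}$; crucially $\Ext^{n-1}_{\mathcal{G}}(M,P)=0$ by hypothesis. Applying $\Ext^{\bullet}_{\mathcal{G}}(M,-)$ and invoking naturality of the long exact sequence along the colimit cocone, I obtain a commuting ladder whose top row is the (exact) direct limit of the long exact sequences of the $\ses{Y_{\alpha}}{\Psi(Y_{\alpha})}{C_{\alpha}}$ and whose bottom row is the long exact sequence of $\ses{Y}{P}{C}$, with the $\theta^{i}$ as vertical maps. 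Surjectivity of $\theta^{n-1}_{Y}$ then falls out of a single square: the connecting map $\delta\colon \Ext^{n-2}_{\mathcal{G}}(M,C)\to \Ext^{n-1}_{\mathcal{G}}(M,Y)$ is epic because $\Ext^{n-1}_{\mathcal{G}}(M,P)=0$, and since $\theta^{n-2}_{C}$ is an isomorphism the relation $\delta\circ\theta^{n-2}_{C}=\theta^{n-1}_{Y}\circ(\varinjlim\delta_{\alpha})$ forces $\theta^{n-1}_{Y}$ to be epic. Injectivity I would get from the monomorphism four lemma applied to the window $\varinjlim \Ext^{n-2}_{\mathcal{G}}(M,\Psi(Y_{\alpha})) \to \varinjlim \Ext^{n-2}_{\mathcal{G}}(M,C_{\alpha}) \to \varinjlim \Ext^{n-1}_{\mathcal{G}}(M,Y_{\alpha}) \to \varinjlim \Ext^{n-1}_{\mathcal{G}}(M,\Psi(Y_{\alpha}))$ versus its bottom counterpart: the first vertical map is epic and the second monic by preservation in degree $n-2$, while the fourth is monic because both its source and target vanish (the $\Psi(Y_{\alpha})$ are injective with $n-1\ge 1$, and $\Ext^{n-1}_{\mathcal{G}}(M,P)=0$).

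I expect the main obstacle to be avoiding circularity. A naive five lemma would require that $\theta^{n-1}$ be an isomorphism already at $C$, which is precisely the sort of statement one is trying to prove; the resolution is the asymmetry above, where surjectivity is extracted for free from the epimorphism $\delta$ (supplied by the vanishing hypothesis on $\varinjlim\im{\Psi}$) and injectivity needs only the left four lemma, which uses no information about $\theta^{n-1}$ beyond the trivial vanishing at the injective terms. A secondary point to verify with care is that passing to $\varinjlim_{\alpha}$ of the long exact sequences and comparing with the long exact sequence of the limit short exact sequence genuinely produces the canonical comparison maps $\theta^{i}$ as the ladder's vertical arrows; this is where naturality of $\iota$, of the cokernel, and of the connecting homomorphisms must be assembled.
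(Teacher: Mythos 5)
Your proof is correct and takes essentially the same route as the paper's: the identical forward argument using that objects of $\im{\Psi}$ are injective (so $\Ext^{n-1}_{\mathcal{G}}(M,-)$ vanishes on them and degree-$(n-1)$ preservation forces vanishing on the limit), and for the converse the same device of forming the direct system of short exact sequences $0 \to Y_\alpha \to \Psi(Y_\alpha) \to C_\alpha \to 0$ via $\iota$, passing to the (exact) direct limit, and comparing the two four-term exact windows whose outer terms vanish by injectivity of $\Psi(Y_\alpha)$ and by the hypothesis $\varinjlim \im{\Psi} \subseteq \kker{\Ext^{n-1}_{\mathcal{G}}(M,-)}$. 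The only difference is packaging: the paper truncates both rows by the vanishing right-hand terms and concludes with a single snake-lemma application, whereas you split the chase into surjectivity (via the epic connecting map and the isomorphism $\theta^{n-2}_{C}$) and injectivity (via the four lemma) --- the same diagram argument in two pieces.
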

\begin{proof}
Let $(N_{\lambda})_{\lambda}$ be a direct system in $\im{\Psi}$ and suppose that the canonical morphism $\varinjlim \Ext^{n-1}_{\mathcal{G}}(M,N_{\lambda}) \rightarrow \Ext^{n-1}_{\mathcal{G}}(M,\varinjlim N_{\lambda})$ is an isomorphism. Since each $N_{\lambda}$ is an injective object of $\mathcal{G}$ and that $n-1>0$, then we get that $\Ext^{n-1}_{\mathcal{G}}(M,N_{\lambda}) = 0$. Therefore the previous isomorphism gives that $\Ext^{n-1}_{\mathcal{G}}(M,\varinjlim N_{\lambda}) = 0$; that is $\varinjlim N_{\lambda} \in \kker{\Ext^{n-1}_{\mathcal{G}}(M,-)}$.

For the converse, we only need to check that the functor $\Ext^{n-1}_{\mathcal{G}}(M,-):\mathcal{G} \rightarrow \textrm{Ab}$ preserves direct limits. To see this, we consider  $(M_{\lambda})_{\lambda \in \Lambda}$, a direct system in $\mathcal{G}$. Note that from the natural transformation $\iota$ of the mentioned before, we get a direct system of short exact sequences in $\mathcal{G}$ of the form:
\[
0 \rightarrow M_{\lambda} \xrightarrow{\iota_{M_\lambda}} \Psi(M_{\lambda}) \rightarrow \coker{\iota_{M_\lambda}} \rightarrow 0.
\]
Since  $\mathcal{G}$ is a Grothendieck category, we obtain the following exact sequence in $\mathcal{G}$
\[
0 \rightarrow \varinjlim M_{\lambda} \xrightarrow{\varinjlim \iota_{M_\lambda}} \varinjlim \Psi(M_{\lambda}) \rightarrow \varinjlim \coker{\iota_{M_\lambda}} \rightarrow 0.
\]
Now apply the functor $\Hom_{\mathcal{G}}(M,-):\mathcal{G} \rightarrow \text{Ab}$ to this last exact sequence and obtain  the following commutative diagram with exact rows and where $(X,Y)^i_\mathcal{G}$ denotes $\Ext^i_{\mathcal G}(X,Y)$.
\[
\begin{tikzpicture}[xscale=4]
\node (11) at (0,0) {\footnotesize	 $\varinjlim (M,\Psi(M_{\lambda}))^{n-2}_{\mathcal{G}}$};
\node (12) [right of=11,xshift=2.5cm] {\footnotesize	 $\varinjlim (M,\coker{\iota_{M_\lambda})}^{n-2}_{\mathcal{G}}$};
\node (13) [right of=12,xshift=2.25cm] {\footnotesize	 $\varinjlim (M,M_{\lambda})^{n-1}_{\mathcal{G}}$};
\node (14) [right of=13,xshift=2.0cm] {\footnotesize	 $\varinjlim (M,\Psi(M_{\lambda}))^{n-1}_{\mathcal{G}}$};
\node (21) at (0,-1.5) {\footnotesize	 $(M,\varinjlim \Psi(M_{\lambda}))^{n-2}_{\mathcal{G}}$};
\node (22) [right of= 21, xshift=2.5cm] {\footnotesize $(M, \varinjlim \coker{\iota_{M_\lambda}}^{n-2}_{\mathcal{G}}$};
\node (23) [right of=22, xshift=2.25 cm] {\footnotesize $(M,\varinjlim M_{\lambda})^{n-1}_{\mathcal{G}}$};
\node (24) [right of=23, xshift=2.0cm] {\footnotesize $(M, \varinjlim\Psi( M_{\lambda}))^{n-1}_{\mathcal{G}}$};
\draw[->] (11) -- (12);
\draw[->] (12) -- (13);
\draw[->] (13) -- (14);
\draw[->] (21) -- (22);
\draw[->] (22) -- (23);
\draw[->] (23) -- (24);
\draw[->] (11) -- node[right]{\tiny $f_1$} (21);
\draw[->] (12) -- node[right]{\tiny $f_2$} (22);
\draw[->] (13) -- node[right]{\tiny $f_3$} (23);
\draw[->] (14) --  (24);
\end{tikzpicture}
\]
%
%
%
Note that both terms on the right of each row are 0, since $\Psi(M_{\lambda})$ is an injective object of $\mathcal{G}$ and $n>1$,  and also by hypothesis  $\Ext^{n-1}_{\mathcal{G}}(M,\varinjlim \Psi(M_{\lambda}))=0$.  Now, since $f_1$ and $f_2$ are isomorphisms, then we obtain from the Snake Lemma  that $f_3$ is also an isomorphism, thus completing the proof.

\end{proof}

We apply of this last result to $\mathcal{G} = \Rmod$.

\begin{corollary} \label{FP_n-dir-lim-characterization}
Let  $M \in \Rmod$. Then $M\in \FP{n}$, if and only if, $M\in \FP{n-1}$ and $\varinjlim \im{\Psi} \subseteq \kker{\Ext^{n-1}_{\mathcal{G}}(M,-)}$.
\end{corollary}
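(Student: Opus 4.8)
The plan is to derive the corollary directly from the general Grothendieck-category statement in Theorem \ref{FP_n-lim-im}, specialized to $\mathcal{G} = \Rmod$, together with the homological characterization of finitely $n$-presented modules in Theorem \ref{FPn_Ext_Tor}. First I would note that $\Rmod$ is a Grothendieck category, so the abstract functor $\Psi$ becomes the concrete functor $M \mapsto I^{\Hom_R(M,I)}$ built from the fixed injective cogenerator $I$, and $\varinjlim \im{\Psi}$ is exactly the direct-limit closure appearing in the statement. Feeding this into Theorem \ref{FP_n-lim-im} gives, for $n>1$, the equivalence between: (a) $\Ext^i_R(M,-)$ preserving direct limits for all $0 \leq i \leq n-1$; and (b) $\Ext^i_R(M,-)$ preserving direct limits for all $0 \leq i \leq n-2$ together with $\varinjlim \im{\Psi} \subseteq \kker{\Ext^{n-1}_R(M,-)}$.

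The second step is purely a translation via Theorem \ref{FPn_Ext_Tor}. Applying that theorem with index $n-1$ in place of its $n$, so that its condition (1) reads $M \in \FP{n}$, I obtain that $M \in \FP{n}$ if and only if $\Ext^i_R(M,-)$ commutes with direct limits for every $0 \leq i \leq n-1$, which is precisely condition (a) above. Likewise, applying Theorem \ref{FPn_Ext_Tor} with index $n-2$ shows that $M \in \FP{n-1}$ if and only if $\Ext^i_R(M,-)$ commutes with direct limits for every $0 \leq i \leq n-2$, which is the first half of condition (b). Substituting these two identifications into the equivalence (a) $\iff$ (b) yields exactly the claim: $M \in \FP{n}$ if and only if $M \in \FP{n-1}$ and $\varinjlim \im{\Psi} \subseteq \kker{\Ext^{n-1}_R(M,-)}$.

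The argument is therefore essentially a bookkeeping exercise, and I do not expect a genuine obstacle once Theorem \ref{FP_n-lim-im} is in hand; all the homological content has been pushed into that result (in particular the Snake Lemma step exploiting the injectivity of each $\Psi(M_\lambda)$ and the vanishing $\Ext^{n-1}_{\mathcal{G}}(M,\Psi(M_\lambda))=0$). The points that require the most care are the index shifts — correctly matching the ranges $0 \leq i \leq n-1$ and $0 \leq i \leq n-2$ to the appropriate instances of Theorem \ref{FPn_Ext_Tor} — and the standing hypothesis $n>1$ inherited from Theorem \ref{FP_n-lim-im}, under which the phrases ``commutes with direct limits'' and ``preserves direct limits'' refer to the same property of the covariant functors $\Ext^i_R(M,-)$ in the second variable.
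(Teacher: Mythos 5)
Your proposal is correct and follows exactly the paper's route: the paper's proof is the one-line observation that the corollary ``follows directly from the characterization of Theorem \ref{FPn_Ext_Tor} and Theorem \ref{FP_n-lim-im},'' and you have simply made the index bookkeeping explicit (applying Theorem \ref{FPn_Ext_Tor} with its $n$ replaced by $n-1$ and $n-2$, under the standing hypothesis $n>1$). Nothing further is needed.
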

\begin{proof}
This follows directly from the characterization of Theorem \ref{FPn_Ext_Tor} and Theorem \ref{FP_n-lim-im}.
\end{proof}


%

\bibliographystyle{alpha}
\bibliography{bibliography-n-hereditary}

\end{document}